\pgfplotsset{compat=newest}
\newtheorem{theorem}{Theorem}
\newtheorem{lemma}[theorem]{Lemma}
\newtheorem{proposition}[theorem]{Proposition}
\theoremstyle{definition}
\theoremstyle{remark}
\numberwithin{equation}{section}
\newcommand{\set}[1]{\left\{{#1}\right\}}
\newcommand{\measure}[1]{m \left({#1}\right)}
\newcommand{\measset}[1]{m \left(\set{#1}\right)}
\newcommand{\norm}[2]{\left\|{#1}\right\|_{{#2}}}
\newcommand{\Lp}[1]{\mathrm{L}^{#1}}
\newcommand{\B}{\mathcal{B}}
\newcommand{\D}{\mathbb{D}}
\newcommand{\N}{\mathbb{N}}
\newcommand{\R}{\mathbb{R}}
\newcommand{\C}{\mathbb{C}}
\newcommand{\A}{\mathcal{A}}
\newcommand{\J}{\mathcal{J}}
\DeclareMathOperator{\Real}{Re}
\def\circle{\partial \D}
\def\ic{\int_{\partial \D}}
\title{The Central Limit Theorem for inner functions II}
\author{Artur Nicolau}
\author{Od\'i Soler i Gibert}
\newcommand{\Addresses}{{
  \bigskip
  \footnotesize

  \noindent Artur~Nicolau\\
  \textsc{Universitat Aut\`onoma de Barcelona and Centre de Recerca Matem\`atica\\
          Departament de Matem\`atiques\\
          08193 Bellaterra (Cerdanyola del Vallès)\\
          Catalunya}\\
  \textit{E-mail address: }\texttt{artur.nicolau@uab.cat}

  \bigskip

  \noindent Odí~Soler~i~Gibert\\
  \textsc{Universitat Politècnica de Catalunya - BarcelonaTech (UPC),\\
          Pavelló I,\\
          Diagonal 647,\\
          08028 Barcelona\\
          Catalunya}\\
  \textit{E-mail address: }\texttt{odi.soler@upc.edu}

}}
\thanks{The authors are supported in part by the Generalitat de Catalunya (grant 2021 SGR 00071), the Spanish Ministerio de Ciencia e Innovaci\'on (project  PID2021-123151NB-I00) and the Spanish Research Agency (Mar\'ia de
Maeztu Program CEX2020-001084-M).
        The second author was also partially supported by the ERC project CHRiSHarMa no. DLV-682402.}
\begin{document}

    \begin{abstract}
        A sharp version of the Central Limit Theorem for linear combinations of iterates of an inner function is proved. The authors previously showed this result assuming a suboptimal condition on the coefficients of the linear combination.
        Here we explain a variation of the argument which leads to the sharp result.
        We also review the steps of the proof as well as the main technical tool which is Aleksandrov Disintegration Theorem of Aleksandrov-Clark measures.
    \end{abstract}
    
    \maketitle

    \section{Introduction and main results}
    \label{sec:Introduction}
    Let $\D$ be the unit disc in the complex plane and let $m$ be the normalized Lebesgue measure on the unit circle $\circle$. An analytic mapping from $\D$ into $\D$ is called inner if its radial limits have modulus one at almost every point of the unit circle. Hence any inner function $f$ induces a map defined at almost every point $z \in \circle$ as $f(z) = \lim_{r \to 1^{-}} f(rz)$. Let $f^n = f \circ \ldots \circ f : \circle \rightarrow \circle$ denote the $n$-th iterate of the inner function $f$. It has been recently shown that the iterates $\{ f^n \}$ behave as a sequence of independent random variables in the sense that they obey appropriate versions of classical results on sequences of independent random variables (see \cite{ref:Nicolau2022} and \cite{ref:NicolauSolerGibert2022}). There are of course other situations in Complex Analysis where one encounters probabilistic behaviors in settings where the notion of independence is not directly present. A classical example of such setting is the assymptotic behaviour of lacunary series. In a series of classical papers by Paley and Zygmund, Salem and Zygmund and Weiss, the authors consider versions of the Khintchine-Kolmogorov Theorem for pointwise convergence (see~\cite[Section~V.6]{ref:ZygmundTrigonometricSeries-SecondEd}), versions of the Central Limit Theorem (\cite{ref:SalemZygmundLacunarySeries-I} and \cite{ref:SalemZygmundLacunarySeries-II}) and of the Law of the Iterated Logarithm (\cite{ref:WeissLILLacunarySeries}), for lacunary series. Our work is inspired by the Central Limit Theorem for lacunary series proved by Salem and Zygmund.

    Recall that a sequence of measurable functions $\{f_N\}$ defined
    at almost every point in the unit circle converges in distribution
    to a (circularly symmetric) standard complex normal variable if for any Borel set $K \subset \C$ such that its boundary $\partial K$ has zero area, one has 
    \begin{equation*}
        \lim_{N \to \infty} \measset{z \in \circle\colon f_N (z) \in K}
        = \frac{1}{ 2 \pi} \int_K e^{- |w|^2 / 2}\, dA(w) . 
    \end{equation*} 
    Let $f$ be an inner function with $f(0)=0$ which is not a rotation and let $\{a_n\}$ be a sequence of complex numbers. A version of the Central Limit Theorem for linear combinations $\sum a_n f^n$ of iterates has been given in ~\cite{ref:NicolauSolerGibert2022} under certain conditions on the size of the coefficients $\{a_n\}$. The main purpose of this paper is to present the following version of this result which holds under the minimal assumption on the size of the coefficients $\{a_n \}$. 
    
    \begin{theorem}
        \label{thm:InnerFunctionsCLT}
        Let $f$ be an inner function with $f(0) = 0$ which is not a rotation.
        Let $\{a_n\}$ be a sequence of complex numbers with
        \begin{equation}
            \label{eq:CoefficientsL2Divergence}
             \sum_{n=1}^\infty |a_n|^2 = \infty.
        \end{equation}
        Consider 
        \begin{equation}
            \label{eq:PartialSumVariance}
            \sigma_N^2
            = \sum_{n=1}^{N} |a_n|^2 + 2 \Real \sum_{k=1}^{N-1} f'(0)^k \sum_{n=1}^{N-k} \overline{a_n} a_{n+k},
            \quad  N=1,2,\ldots
        \end{equation}
        Assume that the coefficients $\{a_n\}$ satisfy that 
        \begin{equation}
            \label{eq:CoefficientGrowth}
            \lim_{N \to \infty}  \frac{|a_N|^2}{\sum_{n=1}^N |a_n|^2} = 0. 
        \end{equation}
        Then 
        \begin{equation*}
            \frac{\sqrt{2}}{\sigma_N} \sum_{n=1}^{N} a_n f^n
        \end{equation*}
        converges in distribution to a standard complex normal variable.  
    \end{theorem}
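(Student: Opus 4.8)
The plan is to prove convergence of the characteristic functions. Write $S_N=\tfrac{\sqrt2}{\sigma_N}\sum_{n=1}^N a_nf^n$ and $c=f'(0)$. By L\'evy's continuity theorem in $\R^2\cong\C$, and because the limiting law is absolutely continuous (so every Borel set whose boundary has zero area is a continuity set), it suffices to show that for each fixed $\lambda\in\C$
\[
  I_N:=\ic\exp\!\big(i\,\Real(\bar\lambda\,S_N)\big)\,dm\;\longrightarrow\;e^{-|\lambda|^2/2}.
\]
I would first collect the elementary facts making the right–hand side the correct target. Since $f(0)=0$, the measure $m$ is $f$-invariant, hence $f^l$-invariant for all $l$; combined with the chain rule $\widehat{f^k}(1)=(f^k)'(0)=c^k$ this gives $\ic f^n\overline{f^l}\,dm=c^{\,n-l}$ for $n\ge l$ and $\ic f^nf^l\,dm=0$ (the product is analytic and vanishes at $0$). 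Consequently $\norm{\sum_{n=1}^N a_nf^n}{\Lp{2}(m)}^2=\sigma_N^2$, which is exactly \eqref{eq:PartialSumVariance}, and a short computation gives $\ic(\Real\bar\lambda S_N)^2\,dm=|\lambda|^2$ together with $\ic(\Real\bar\lambda S_N)(\mathrm{Im}\,\bar\lambda S_N)\,dm=0$; so the target $e^{-|\lambda|^2/2}$ equals $\exp\!\big(-\tfrac12\norm{\Real(\bar\lambda S_N)}{\Lp{2}(m)}^2\big)$. The theorem thus reduces to a quantitative central limit statement for the real functions $X_N:=\Real(\bar\lambda S_N)=\sum_{n=1}^N g_n(f^n)$, where $g_n(w):=\Real\big(\bar\lambda\tfrac{\sqrt2\,a_n}{\sigma_N}w\big)$.

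Next I would dispose of the normalization. By the Schwarz lemma $|c|<1$ because $f$ is not a rotation, so the Toeplitz form $\sigma_N^2=\sum_{n,l\le N}a_n\overline{a_l}\ic f^n\overline{f^l}\,dm$ has symbol the Poisson kernel $\tfrac{1-|c|^2}{|1-ce^{i\theta}|^2}\ge\tfrac{1-|c|}{1+|c|}=:\delta>0$; hence $\sigma_N^2\ge\delta\sum_{n\le N}|a_n|^2\to\infty$ by \eqref{eq:CoefficientsL2Divergence}. Together with \eqref{eq:CoefficientGrowth} (which forces $\max_{n\le N}|a_n|^2/\sum_{n\le N}|a_n|^2\to0$) this yields the Lindeberg-type smallness $\max_{n\le N}\norm{g_n}{\infty}\to0$, while $\sum_{n\le N}\norm{g_n}{\Lp{2}(m)}^2\le|\lambda|^2/\delta$ stays bounded.

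The core of the argument is the iterated Aleksandrov disintegration. Let $\mu_\alpha$, $\alpha\in\circle$, be the Aleksandrov--Clark measures of $f$; since $f(0)=0$ each $\mu_\alpha$ is a probability measure, the Aleksandrov disintegration theorem gives $\ic\big(\ic h\,d\mu_\alpha\big)\,dm(\alpha)=\ic h\,dm$ for $h\in\Lp{1}(m)$, and comparing first Taylor coefficients at $z=0$ in the Herglotz identity $\tfrac{\alpha+f(z)}{\alpha-f(z)}=\ic\tfrac{\zeta+z}{\zeta-z}\,d\mu_\alpha(\zeta)$ yields the key moment formula $\ic\zeta\,d\mu_\alpha(\zeta)=\bar c\,\alpha$. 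Starting from $I_N$ and using $f^n=f^{n-1}\circ f$ together with $f$-invariance of $m$, one disintegrates over the innermost iterate and peels off the factor $e^{ig_1}$:
\[
  I_N=\ic\Big(\prod_{n=2}^N e^{ig_n(f^{n-1}(\alpha))}\Big)\Big(\ic e^{ig_1(w)}\,d\mu_\alpha(w)\Big)\,dm(\alpha).
\]
A second-order Taylor expansion of the inner integral, using $\ic\zeta\,d\mu_\alpha(\zeta)=\bar c\alpha$, gives $\ic e^{ig_1}\,d\mu_\alpha=1+i\,\Real(\beta_1\alpha)-\tfrac12\ic g_1^2\,d\mu_\alpha+O(\norm{g_1}{\infty}^3)$, whose linear term is again of the form $\Real(\beta_1\alpha)$, i.e.\ a function of $\alpha$ of the same shape as the $g_n$'s; thus the remaining expression is of the same type and the procedure can be repeated. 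Iterating $N$ times and collecting terms turns $I_N$ into a product $\prod_{n\le N}\big(1-\tfrac12q_n+\text{error}_n\big)$ whose logarithm, once the moment formula is used to reassemble the off-diagonal correlations $c^{\,n-l}$, sums to $-\tfrac12\norm{X_N}{\Lp{2}(m)}^2=-\tfrac12|\lambda|^2$; the total error is bounded by a quantity essentially of the form $C(\lambda)\,\max_{n\le N}\norm{g_n}{\infty}\cdot\sum_{n\le N}\norm{g_n}{\Lp{2}(m)}^2$, which tends to $0$ by the second step. Hence $I_N\to e^{-|\lambda|^2/2}$, proving the theorem.

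The main obstacle — and the point where the present argument improves on \cite{ref:NicolauSolerGibert2022} — is precisely the bookkeeping in this last step. The corrections of order $\ge2$ produced by the $\mu_\alpha$-integrals depend on $\alpha$ through higher Taylor coefficients of $f$ in a way far less transparent than the first moment, and in the $N$-fold expansion these interact with the polynomial factors carried along the peeling. One must organize the expansion so that all such cross terms are reabsorbed into $\norm{X_N}{\Lp{2}(m)}^2$ up to an error controlled by $\max_{n}\norm{g_n}{\infty}$ alone (which vanishes under \eqref{eq:CoefficientGrowth}), rather than by a stronger, non-sharp norm of $\{a_n\}$; this reorganization is the ``variation of the argument'' the paper announces.
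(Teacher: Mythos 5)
Your setup (L\'evy continuity, the identity $\|\sum a_nf^n\|_2^2=\sigma_N^2$, the lower bound $\sigma_N^2\gtrsim S_N^2$, and the observation that \eqref{eq:CoefficientGrowth} forces $\max_{n\le N}|a_n|/\sigma_N\to0$) is correct and matches the paper's preliminaries. But the core of your argument --- the $N$-fold ``peeling'' via iterated Aleksandrov disintegration with a second-order Taylor expansion at each step --- is not carried out, and you say so yourself in the last paragraph. This is not a detail that can be deferred: it is the entire content of the theorem. Two concrete obstructions. First, consecutive iterates have the \emph{fixed} correlation $\ic f^{n+1}\overline{f^n}\,dm=f'(0)$, which does not decay, so a term-by-term product expansion has no approximate factorization between adjacent factors; some grouping mechanism is unavoidable. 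Second, the second-order term $\tfrac12\ic g_1^2\,d\mu_\alpha$ is genuinely $\alpha$-dependent, since $\ic \overline{w}^2\,d\mu_\alpha(w)=\tfrac{f''(0)}{2}\overline{\alpha}+f'(0)^2\overline{\alpha}^2$; it is a degree-two trigonometric polynomial in $\alpha$, not a constant, so it does not simply exponentiate to $-\tfrac12\|X_N\|_2^2$. At each subsequent peeling these polynomial factors multiply against the remaining exponentials and their degrees and sup-norms must be tracked; the resulting cross terms are of order three and higher in the $g_n$'s, there are $O(N^2)$ of them, and whether they sum to $o(1)$ depends on extracting the exponential decay $|f'(0)|^{|n-m|}$ from the iterated first-moment formula --- none of which you do. The asserted error bound $C(\lambda)\max_n\|g_n\|_\infty\sum_n\|g_n\|_2^2$ is exactly the statement that needs proof.

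For comparison, the paper does not attempt a term-by-term expansion. It uses a Bernstein/Weiss-type block decomposition: the partial sum is split into blocks $\xi_k$ separated by buffer blocks $\eta_k$ of length at least $\tfrac12\varphi(N)^{-1/2}$, where $\varphi$ quantifies \eqref{eq:CoefficientGrowth}. The buffers are discarded (their total $\Lp{2}$ mass is $o(\sigma_N^2)$), and the separation makes all inter-block correlations decay like $|f'(0)|^{\varphi(N)^{-1/2}/2}$, which is what allows the product approximation $e^{i\langle t,T_N\rangle}\approx\prod_k(1+i\sqrt2\langle t,\xi_k\rangle/\sigma_N)e^{-\langle t,\xi_k\rangle^2/\sigma_N^2}$ to be integrated factor by factor, using the exact uncorrelatedness of the $|\xi_k|^2$ (Theorem~\ref{thm:SquaredModuliPartialSums}) and the higher-order correlation bound (Theorem~\ref{thm:HigherCorrelations}). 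If you want to pursue your route, you must either prove the analogue of these decay estimates for adjacent terms (which fails) or reintroduce a block structure --- at which point you have reconstructed the paper's proof. As it stands, the proposal has a genuine gap at its central step.
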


   Other versions of the Central Limit Theorem in this context have been given in \cite{ivrii2023inner} and \cite{aaronson2023dynamics}. 
   
   We will explain an argument due to Salem and Zygmund to show that condition~\eqref{eq:CoefficientGrowth} is optimal in the sense that
   for any sequence $\{a_n\}$ with $\sum |a_n|^2 = \infty$ and  
    \begin{equation*}
        \limsup_{N \to \infty}  \frac{|a_N|^2}{\sum_{n=1}^N |a_n|^2} > 0,
    \end{equation*}
    there are examples of inner functions for which the conclusion cannot hold.
    For $1\leq p < \infty$ let $\|g\|_p$ denote the $\Lp{p}$ norm of the function $g$ on the unit circle defined by 
    \begin{equation*}
        \|g \|_p^p = \int_{\circle} |g|^p dm. 
    \end{equation*}
     Let $f$ be an inner function with $f(0) = 0$ which is not a rotation and let $\{a_n\}$ be a sequence of complex numbers. It was proved in~\cite{ref:NicolauSolerGibert2022} that
    \begin{equation}
        \label{eq:SNSigmaNComparability}
          \frac{1-|f'(0)|}{1+|f'(0)|} \sum_{n=k}^N |a_n|^2 \leq \sigma_N^2 = \norm{\sum_{n=k}^N a_n f^n}{2}^2 \leq \frac{1+|f'(0)|}{1-|f'(0)|} \sum_{n=k}^N |a_n|^2 , \quad N= 1,2,\ldots,
    \end{equation}
    for $1 \leq k \leq N.$
    Hence the series $\sum a_n f^n$ converges in $\Lp{2} (\circle)$ if and only if $\sum |a_n|^2 < \infty$. Moreover if this last condition holds, then the series $\sum a_n f^n (z)$ converges at almost every point $z \in \circle$ (see \cite{ref:Nicolau2022}). In this context, repeating the proof of Theorem~\ref{thm:InnerFunctionsCLT}, one can show that when one has pointwise convergence, the tails obey a Central Limit Theorem.  

  \begin{theorem}
        \label{thm:TailsCLT}
        Let $f$ be an inner function with $f(0) = 0$ which is not a rotation.
        Let $\{a_n\}$ be a sequence of complex numbers with $\sum |a_n|^2 < \infty$.
        Consider 
        \begin{equation*}
            \sigma^2 (N)
            = \sum_{n \geq N} |a_n|^2 + 2 \Real \sum_{k \geq 1} f'(0)^k \sum_{n \geq N} \overline{a_n} a_{n+k},
            \quad  N=1,2,\ldots
        \end{equation*}
        Assume that the coefficients $\{a_n\}$ satisfy that
        \begin{equation}
            \label{eq:TailsCoefficientDecay}
            \lim_{N \to \infty}  \frac{|a_N|^2}{\sum_{n \geq N}  |a_n|^2} = 0.
        \end{equation}
        Then 
        \begin{equation*}
            \frac{\sqrt{2}}{\sigma (N)} \sum_{n=N}^{\infty} a_n f^n
        \end{equation*}
        converges in distribution to a standard complex normal variable.
    \end{theorem}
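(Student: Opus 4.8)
The plan is to imitate the proof of Theorem~\ref{thm:InnerFunctionsCLT} step by step, systematically replacing the partial sums $\sum_{n=1}^{N}$ by the tails $\sum_{n\ge N}$ and the variances $\sigma_N^2$ by $\sigma^2(N)$. I would begin by recording the preliminary facts that align the two settings. Write $T_N=\sum_{n\ge N}a_nf^n$; since $\sum|a_n|^2<\infty$ this series converges in $\Lp2(\circle)$. Because the correlation $\int_{\circle}f^n\overline{f^{n+k}}\,dm$ depends only on $k$, both the identity of~\cite{ref:NicolauSolerGibert2022} relating the variance to an $\Lp2$-norm and the two-sided estimate recalled above hold verbatim for any block $\sum_{n=N}^{M}a_nf^n$ in place of $\sum_{n=1}^{N}a_nf^n$; letting $M\to\infty$ and invoking $\Lp2$-convergence, one obtains
\[
\frac{1-|f'(0)|}{1+|f'(0)|}\sum_{n\ge N}|a_n|^2\le\sigma^2(N)=\norm{T_N}{2}^2\le\frac{1+|f'(0)|}{1-|f'(0)|}\sum_{n\ge N}|a_n|^2.
\]
Consequently $\sigma^2(N)\to 0$, and $\sigma^2(N)$ is comparable to $\sum_{n\ge N}|a_n|^2$, so hypothesis~\eqref{eq:TailsCoefficientDecay} is \emph{equivalent} to the relative Lindeberg condition $|a_N|^2/\sigma^2(N)\to 0$; and it is only through this relative condition that~\eqref{eq:CoefficientGrowth} enters the proof of Theorem~\ref{thm:InnerFunctionsCLT}.

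The core of the argument is then exactly that of Theorem~\ref{thm:InnerFunctionsCLT}, applied to $T_N$ with $\sum_{n=1}^{N}$ replaced by $\sum_{n\ge N}$ throughout. By the continuity theorem for characteristic functions on $\C\cong\R^2$ (the limiting law being absolutely continuous) it suffices to show that, for every $\lambda\in\C$,
\[
\int_{\circle}\exp\Bigl(i\,\Real\bigl(\bar\lambda\,\tfrac{\sqrt2}{\sigma(N)}\,T_N\bigr)\Bigr)\,dm\longrightarrow e^{-|\lambda|^2/2}\qquad\text{as }N\to\infty.
\]
One rewrites this characteristic integral, via Aleksandrov's Disintegration Theorem, as an iterated integral against the Aleksandrov-Clark measures of $f$; this decouples consecutive iterates up to errors controlled by the geometric factors $|f'(0)|^k$, and the relative Lindeberg condition from the first paragraph then forces the resulting (now infinite) telescoping product to converge to $e^{-|\lambda|^2/2}$. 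The convergence of every series occurring in these estimates is guaranteed by $\sum_{n\ge N}|a_n|^2<\infty$ together with the decay $|f'(0)|^k$. Alternatively one may first discard the tail beyond a large index $M=M(N)$: the quantity $\norm{T_N-\sum_{n=N}^{M}a_nf^n}{2}$ can be made an arbitrarily small fraction of $\sigma(N)$, so the corresponding characteristic functions are uniformly close; the finite block $\{a_n\}_{n=N}^{M}$ again satisfies a Lindeberg condition inherited from~\eqref{eq:TailsCoefficientDecay}, to which Theorem~\ref{thm:InnerFunctionsCLT} applies directly, and an $\varepsilon$-argument concludes.

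The hard part, and essentially the only thing that genuinely needs checking, is that the disintegration argument of Theorem~\ref{thm:InnerFunctionsCLT} is \emph{local in the index}: it uses nowhere that summation begins at $n=1$, nor that $\sigma_N\to\infty$, only the relative smallness of each individual coefficient within the range of summation and the exponential decay of correlations, both of which are available here by the first paragraph. One should also confirm that the evaluation of the limiting characteristic function goes through unchanged when $\sigma(N)\to 0$ rather than $\sigma_N\to\infty$; this is immediate, the normalized sum having $\Lp2$-norm exactly $\sqrt 2$ in either regime. Granted the locality of the argument, Theorem~\ref{thm:TailsCLT} follows exactly as Theorem~\ref{thm:InnerFunctionsCLT}.
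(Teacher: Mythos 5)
Your proposal follows exactly the route the paper indicates for Theorem~\ref{thm:TailsCLT}, namely repeating the proof of Theorem~\ref{thm:InnerFunctionsCLT} with the tails $\sum_{n\ge N}a_nf^n$ in place of the partial sums, after noting that $\sigma^2(N)\simeq\sum_{n\ge N}|a_n|^2$ so that~\eqref{eq:TailsCoefficientDecay} supplies the uniform smallness $\sup_{n\ge N}|a_n|^2\le\varphi(N)\sum_{m\ge N}|a_m|^2$ needed for the block construction. Your key observations --- that the block decomposition, the correlation estimates and the characteristic-function argument are local in the index and never use $\sigma_N\to\infty$ --- are precisely what makes the transfer work, so the proposal is correct and coincides with the paper's approach.
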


       Given a set $\A$ of positive integers, consider the corresponding partial sum 
    \begin{equation*}
        \xi (\A) = \sum_{n \in \A} a_n f^n.
    \end{equation*} 
    The proof of Theorem~\ref{thm:InnerFunctionsCLT} contains two main ideas. The first one is a convenient splitting of the partial sums
    \begin{equation*}
        \sum_{n=1}^N a_n f^n = \sum_k (\xi (\A_k) + \xi (\B_k) )
    \end{equation*}
    into certain alternating blocks of consecutive terms $\xi (\A_k)$ and $\xi (\B_k)$ which depend on $N,$ that is, satisfying $\max \A_k + 1 = \min \B_k$ and $\max \B_k + 1= \min \A_{k+1} $, which obey two counteracting properties. On the one hand $\|\sum \xi (\B_k) \|_2 / \sigma_N$ must be small so that $\sum \xi (\B_k)$ becomes irrelevant. On the other hand the number of terms of each block $\xi(\B_k)$ must be large so that the correlations between different blocks $\xi (\A_k)$ decay sufficiently fast. The construction of these blocks is inspired by a similar construction of Weiss in \cite{ref:WeissLILLacunarySeries}. The second main idea in the proof was already present in~\cite{ref:NicolauSolerGibert2022} and concerns the decay of certain correlations which naturally appear when proving a Central Limit Theorem. We mention two main properties. 
    If $\A$ and $\B$ are two sets of positive integers such that $a<b$ for any $ a \in \A$ and $b \in B$, it turns out that $|\xi(\A)|^2$ and $|\xi(\B)|^2$ are uncorrelated, that is, 
    \begin{equation}
        \label{eq:PartialSumsUncorrelated}
        \ic |\xi (\A)|^2|\xi (\B)|^2\, dm = \left(\ic |\xi (\A)|^2\, dm\right) \left(\ic |\xi (\B)|^2\, dm\right)
    \end{equation}
    (see Theorem~\ref{thm:SquaredModuliPartialSums} in Section~\ref{sec:AleksandrovClarkMeasures}).
    The second property provides an exponential decay of the higher order correlations of the iterates.
    More concretely, let $\varepsilon_i = 1$ or $\varepsilon_i = -1$ for $i=1,2,\ldots, k$
    and $n_1 < \ldots < n_k$ be positive integers satisfying $n_j - n_{j-1} \geq q \geq 1$, $j=2, \ldots , k$.
    Denote $\boldsymbol{\varepsilon} = (\varepsilon_1,\ldots,\varepsilon_k)$
    and $\boldsymbol{n} = (n_1,\ldots,n_k).$
    For a positive integer $n$, denote by $f^{-n}$ the function defined by $f^{-n} (z) = \overline{f^n (z)}$, $z \in \circle$.
    It was shown in~\cite{ref:NicolauSolerGibert2022} that
    \begin{equation}
        \label{eq:HigherOrderCorrelations}
        \left|\ic \prod_{j=1}^k f^{\varepsilon_j n_j}\, dm\right|
        \leq C^k k! |f'(0)|^{\Phi(\boldsymbol{\varepsilon},\boldsymbol{n})}, \quad k=1,2, \ldots ,
    \end{equation}
    if $q$ is sufficiently large
    and where $\Phi$ is a certain function depending on the choice of indices
    that satisfies $\Phi(\boldsymbol{\varepsilon},\boldsymbol{n}) \geq kq/4$
    and that is well suited for summing over the indices afterwards
    (see Theorem~\ref{thm:HigherCorrelations} in Section~\ref{sec:AuxiliaryResults}).
    The main technical tool in the proof of both properties
    \eqref{eq:PartialSumsUncorrelated} and \eqref{eq:HigherOrderCorrelations} is
    the theory of  Aleksandrov-Clark measures and more concretely, the Aleksandrov Disintegration Theorem.

    We have tried to make this paper self-contained. We use some auxiliary results from~\cite{ref:NicolauSolerGibert2022} but, when possible, we have provided simple proofs. This paper is structured as follows.
    In Section~\ref{sec:HypothesisOptimality} we show that the growth condition~\eqref{eq:CoefficientGrowth} on the coefficients
    cannot be improved.
    Section~\ref{sec:AleksandrovClarkMeasures} contains a brief exposition on Aleksandrov-Clark measures and their application in the proof of Theorem~\ref{thm:InnerFunctionsCLT}.
    In Section~\ref{sec:AuxiliaryResults} we state some auxiliary results.
    Finally, we prove Theorem~\ref{thm:InnerFunctionsCLT} in Section~\ref{sec:ProofOfCLT}

    We thank the referee for reading carefully the paper and for several important suggestions. In particular the referee found several errors and inaccuracies which fortunately could be solved.

    \section{Optimality of the assumption on the coefficients}
    \label{sec:HypothesisOptimality}
    Here we discuss the optimal character of condition~\eqref{eq:CoefficientGrowth}.
    Observe that the results of Salem and Zygmund~\cite{ref:SalemZygmundLacunarySeries-I}
    can be applied in this context to inner functions of the form $f(z) = z^d$ with $d \geq 2.$
    Thus, since they showed that condition~\eqref{eq:CoefficientGrowth} is optimal for lacunary series,
    it cannot be improved for inner functions in general.
    Nonetheless, we expose their argument in slightly more general terms
    in order to show that it applies to other examples of inner functions $f.$

    Given a sequence of measurable functions $\{f_n\}$ defined on the unit circle,
    we say that they converge in distribution to a finite measure $\mu$ if
    \begin{equation*}
        \lim_{n \to \infty} \measset{z \in \circle\colon f_n (z) \in K} = \mu(K)
    \end{equation*}
    for every bounded set $K$ such that $\mu(\partial K) = 0.$
    This definition is usually stated in general for sequences of finite measures,
    but the current one will suffice for our purposes.
    Let us denote by $B(r)$ the closed ball of radius $r > 0$ centred at the origin.
    In the subsequent argument, it will be enough for us to restrict the general bounded sets $K$
    to balls $B(r),$ $r > 0.$
    Observe that in this case, if we define the functions $P_n(r) = \measset{z \in \circle\colon f_n(z) \in B(r)}$
    and $P(r) = \mu(B(r)),$ convergence in distribution implies that $\lim_{n\to\infty} P_n(r) = P(r)$
    whenever $r > 0$ is a point at which $P$ is continuous.
    
    \begin{proposition}
        \label{prop:OptimalityCoefficientsGrowth}
        Le $\{g_n\}$ be a sequence of measurable functions in the unit circle with $|g_n(z)| \leq 1$ for every $n \geq 1$ and for almost every $z \in \circle.$
    Let $\{a_n\}$ be a sequence of complex numbers that is not square summable and define
    \begin{equation*}
        S_N^2 = \sum_{n=1}^N |a_n|^2.
    \end{equation*}
    Denote
    \begin{equation*}
        G_N(z) = \sum_{n=1}^N a_n g_n (z),\qquad z \in \circle.
    \end{equation*}
    Assume that the sequence $\{G_N/S_N\}$ converges in distribution to a finite measure $\mu$
    and that it satisfies the uniform decay 
    \begin{equation}
        \label{eq:UniformDecay}
        \measset{z \in \circle\colon \left|\frac{G_N(z)}{S_N}\right| > r} \leq \varphi(r),
        \qquad \text{ for all } r>0,  N \geq 1,
    \end{equation}
    where $\varphi(r)$ is a positive decreasing function with $\lim_{r\to\infty} \varphi(r) = 0.$
    Assume that
        \begin{equation}
            \label{eq:CoefficientGrowthShort}
            \limsup_{N \to \infty}  \frac{|a_{N}|^2}{S_{N}^2} > 0.
        \end{equation}
        Then, $\mu$ is a probability measure and there exists $r_0 > 0$ such that
        \begin{equation}
            \label{eq:BoundednessInMeasure}
            \lim_{N\to\infty} \measset{z \in \circle\colon \left|\frac{G_N(z)}{S_N}\right| \leq r} = 1
        \end{equation}
        for all $r \geq r_0.$
        In addition $r_0 >0$ can be chosen so that $\mu(B(r_0)) = 1.$
    \end{proposition}
    \begin{proof}
    Given $r > 0$ and $N \geq 1,$ denote
    \begin{equation*}
        A_N(r) = \set{z \in \circle\colon \left|\frac{G_N(z)}{S_N}\right| \leq r}
    \end{equation*}
    and define the functions
    \begin{equation*}
        P_N(r) = \measure{A_N(r)}, \qquad
        P(r) = \mu(B(r)).
    \end{equation*}
    From the definitions, it is clear that $P_N$ and $P$ are nondecreasing functions.
    For later convenience, we recall that this implies that the set of points at which $P$ is discontinuous is countable.

    First we show that the uniform decay~\eqref{eq:UniformDecay} implies that $\mu$ is a probability measure.
    It is clear from the definition that for each $N \geq 1$ we have that $\lim_{r\to\infty} P_N(r) = 1.$
    Next, estimate~\eqref{eq:UniformDecay} for a given $r > 0$ is equivalent to
    \begin{equation*}
        P_N(r) = \measure{A_N(r)} \geq 1 - \varphi(r)
    \end{equation*}
    for all $N \geq 1.$
    In particular, if $r$ is a point at which $P$ is continuous, we have that $P(r) \geq 1 - \varphi(r).$
    Taking an increasing sequence $\{r_k\}$ of points of continuity of $P$ tending to infinity,
    these observations and the fact that $\lim_{N\to\infty} P_N(r_k) = P(r_k)$ for every $k \geq 1$
    imply that $\lim_{r\to\infty} P(r) = 1,$ as we wanted to see.

    Next we show~\eqref{eq:BoundednessInMeasure}.
    Recall that $P$ is nondecreasing and it has at most countably many discontinuities.
    We will show that there exists a point $r_0 > 0$ of continuity of $P$ such that
    $P(r) = 1$ for all $r \geq r_0.$
    Assume that this is not the case, so $P(r) < 1$ for any finite $r.$
    Because of~\eqref{eq:CoefficientGrowthShort}, there exists $\varepsilon > 0$ such that
    $|a_N|^2 > \varepsilon S_N^2$ for infinitely many values of $N.$
    For such values of $N$ we get that
    \begin{equation*}
        \frac{S_{N-1}^2}{S_N^2} = \frac{S_N^2 - |a_N|^2}{S_N^2} < 1 - \varepsilon.
    \end{equation*}
    Now write
    \begin{equation*}
        \frac{G_N}{S_N} = \frac{G_{N-1}}{S_{N-1}} \frac{S_{N-1}}{S_N} + \frac{a_N g_N}{S_N}.
    \end{equation*}
    For $r > 0$ and for such values of $N,$ observe that if $z \in A_{N-1}(r),$
    by the previous identity we have that $z \in A_N(r\sqrt{1-\varepsilon} + 1).$
    In particular, choosing $r > 0$ such that both $r$ and $r \sqrt{1-\varepsilon} + 1$ are continuity points of $P$
    and then taking the limit in $N$ (which exists by assumption),
    we find that for such values of $r$ it holds that $P(r) \leq P(r\sqrt{1-\varepsilon} + 1).$
    For $r \geq 1/(1-\sqrt{1-\varepsilon})$ it happens that $r\sqrt{1-\varepsilon} + 1 \leq r$ and, since $P$ is nondecreasing, we actually have that $P(r) = P(r\sqrt{1-\varepsilon} + 1).$

    \begin{figure}
        \centering
        \begin{tikzpicture}
                \begin{axis}[
                    xlabel = {$r$},
                    ylabel = {$P(r)$},
                    domain = 0:11,
                    ymin=0,
                    ymax=1.1,
                    smooth,
                    axis lines = left,
                    ytick={0.8,1},
                    yticklabels={$P(r_0)$,$1$},
                    xtick={2,4.3,9},
                    xticklabels={$r_0$,$r_1$,$r_2$}]

                    \addplot[color=black,
                             style=dashed]{1};
                    \addplot[color=black,
                             style=solid,
                             thick,
                             domain=2:4.3]{0.8};
                    \addplot[color=black,
                             style=dashed,
                             thick,
                             domain=4.3:9]{0.8};
                    \addplot[mark=*, only marks] coordinates {(2,0.8) (4.3,0.8)};
                    \draw[dashed] (2,0) -- (2,0.8);
                    \draw[dashed] (4.3,0) -- (4.3,0.8);
                    \draw[dashed] (9,0) -- (9,0.8);

                \end{axis}
        \end{tikzpicture}
        \caption{First, since $r_1 > 1/(1-\sqrt{1-\varepsilon})$ and $r_0 = r_1\sqrt{1-\varepsilon}+1,$
                 $P$ is constant and equal to $P(r_0)$ on the interval $[r_0, r_1].$
                 Then, the same argument shows that $P$ is constant on each interval $[r_n,r_{n+1}]$
                 for $n \geq 0.$}
        \label{fig:CumulativeDistributionArgument}
    \end{figure}

    Pick a point $r_0 > 0$ of continuity of $P$ such that $r_0 > 1/(1-\sqrt{1-\varepsilon})$ and define the sequence $\{r_n\}$ given by
    \begin{equation*}
        r_{n+1} = \frac{r_n-1}{\sqrt{1-\varepsilon}}
    \end{equation*}
    for $n \geq 1,$ which increases to infinity by our choice of $r_0.$
    Note that, since the $P$ has at most countably many discontinuities, one can choose $r_0$ as before
    in a way that $r_n$ is a point of continuity of $P$ for every $n \geq 0.$
    We use this sequence to prove that $P$ is constant and equal to $P(r_0) < 1$ on $[r_0,+\infty).$
    Since $r_1 > 1/(1-\sqrt{1-\varepsilon}),$ the previous argument shows that $P$ is constant
    on the interval $[r_1\sqrt{1-\varepsilon}+1,r_1] = [r_0,r_1],$
    so that $P(r) = P(r_0) < 1$ for all $r \in [r_0,r_1]$ (see Figure~\ref{fig:CumulativeDistributionArgument}).
    An inductive reasoning over the sequence $\{r_n\}$ gives that $P(r) = P(r_0) < 1$ for all $r \geq r_0.$
    However, this contradicts the fact that $\lim_{r\to\infty} P(r) = 1,$
    which must hold because we have shown that $\mu$ is a probability measure.
    \end{proof}

    We explain now how to use Proposition~\ref{prop:OptimalityCoefficientsGrowth} in the context of Theorem~\ref{thm:InnerFunctionsCLT}.
    Consider a fixed inner function $f$ with $f(0) = 0$ which is not a rotation and
    let $\{a_n\}$ be a sequence of complex numbers whose moduli are not square summable.
    Define $\sigma_N$ by~\eqref{eq:PartialSumVariance}.
    Recall that estimate~\eqref{eq:SNSigmaNComparability} asserts that there exists $C = C(|f'(0)|) \geq 1$
    such that $C^{-1} \leq \sigma_N/S_N \leq C$ (see also Theorem~\ref{thm:L2NormComparability}).
    However, the actual values of the sequence $\{\sigma_N/S_N\}$ depend both on $f$ and the particular sequence $\{a_n\}.$
    For this reason, further assume that for our fixed $f$ and $\{a_n\}$ the sequence $\{\sigma_N/S_N\}$ converges.

    Denote now
    \begin{equation*}
        G_N(z) = \sum_{n=1}^N a_n f^n(z),\qquad z \in \circle,
    \end{equation*}
    and express
    \begin{equation*}
        \frac{G_N(z)}{S_N} = \frac{\sigma_N}{\sqrt{2}S_N} \frac{\sqrt{2}G_N(z)}{\sigma_N}.
    \end{equation*}
    Therefore, under the assumption on $\{\sigma_N/S_N\},$
    if $\sqrt{2}G_N/\sigma_N$ converges in distribution to a standard complex normal variable,
    $G_N/S_N$ also converges to a (possibly nonstandard) normal variable.
    In particular, $G_N/S_N$ converges in distribution to a finite measure.
    Recall as well that $\sigma_N = \norm{G_N}{2},$ so that we have the uniform bound
    \begin{equation*}
        \norm{\frac{G_N}{S_N}}{2} \leq C.
    \end{equation*}
    Hence, by Chebyshev's inequality,
    the functions $\{G_N/S_N\}$ satisfy the uniform decay~\eqref{eq:UniformDecay} with $\varphi(r) = C^2/r^2.$
    Thus, if
    \begin{equation}
        \label{eq:NoCoefficientGrowth}
        \limsup_{N \to \infty}  \frac{|a_N|^2}{\sum_{n=1}^N |a_n|^2} >0,
    \end{equation}
    Proposition~\ref{prop:OptimalityCoefficientsGrowth} implies that $G_N/S_N$ should converge in distribution
    to a compactly supported measure, which is a contradiction.
    
    Observe that there is a class of inner functions $f$ for which $\{\sigma_N/S_N\}$ converges
    regardless of the sequence $\{a_n\}.$
    Namely, this happens for all inner functions $f$ satisfying $f'(0) = 0$
    in addition to the hypotheses of Theorem~\ref{thm:InnerFunctionsCLT}.
    In this case it actually holds that $\sigma_N = S_N$ for every $N \geq 1$ (see Theorem~\ref{thm:L2NormComparability})
    and the argument follows.
    This includes any function of the form $f(z) = z^d,$ with $d \geq 2,$ which induces a lacunary series.
    However, it also includes examples with no lacunarity, such as general finite Blaschke products $f$ with $f'(0) = 0.$

    \section{Aleksandrov-Clark measures}
    \label{sec:AleksandrovClarkMeasures}
    Given an analytic mapping $f$ from the unit disc into itself (not necessarily inner) and a point $\alpha \in \circle$,
    the function $(\alpha + f)/(\alpha - f)$ has positive real part
    and hence there exists a positive measure $\mu_\alpha = \mu_\alpha (f)$ in the unit circle
    and a constant $C_{\alpha} \in \R$ such that
    \begin{equation}
        \label{eq:ACMeasureHerglotz}
        \frac{\alpha + f(w)}{\alpha - f(w)} = \ic \frac{z + w}{z - w}\, d\mu_\alpha(z) + iC_{\alpha}, \quad w \in \D. 
    \end{equation}
    The measures $\{\mu_\alpha\colon \alpha \in \circle\}$ are called the Aleksandrov-Clark measures of the function $f$.
    Clark introduced them in his paper \cite{ref:ClarkOneDimensionalPerturbations}
    and many of their deepest properties were found by Aleksandrov in \cite{ref:AleksandrovMeasurablePartitionsCircle},
    \cite{ref:AleksandrovMultiplicityBoundaryValuesInnerFunctions} and \cite{ref:AleksandrovInnerFunctionsAndRelatedSpaces}.
    The two surveys \cite{ref:PoltoratskiSarasonACMeasures} and \cite{ref:SaksmanACMeasures}
    as well as \cite[Chapter~IX]{ref:CimaMathesonRossCauchyTransform} contain their main properties and a wide range of applications.
    Observe that if $f(0)=0$ then $\mu_\alpha$ are probability measures.
    Moreover, $f$ is inner if and only if $\mu_\alpha$ is a singular measure for some (all) $\alpha \in \circle$.
    From the definition it is clear that, if $f$ is an inner function, the mass of $\mu_\alpha$ is carried by the set $f^{-1}(\{\alpha\}) \subset \circle.$
    
    Assume $f(0)=0$.
    Computing the first two derivatives in formula \eqref{eq:ACMeasureHerglotz} and evaluating at the origin, we obtain 
    \begin{equation}
        \label{eq:FirstACMeasureMoment}
        \ic \overline{z}\, d\mu_{\alpha}(z) = f'(0) \overline{\alpha}, \quad \alpha \in \circle, 
    \end{equation}
    and
    \begin{equation*}
        \ic \overline{z}^2\, d\mu_{\alpha}(z) = \frac{f''(0)}{2} \overline{\alpha} + f'(0)^2 \overline{\alpha}^2, \quad \alpha \in \circle.
    \end{equation*}
    More generally, if we expand both terms of identity~\eqref{eq:ACMeasureHerglotz} in power series,
    we get that for every positive integer $l$ it holds that
    \begin{equation*}
        \ic {\overline{z}}^l\, d\mu_{\alpha}(z) 
        = \sum_{k=1}^l {\overline{\alpha}}^k  \ic f(z)^k {\overline{z}}^l\, dm(z), \quad \alpha \in \circle . 
    \end{equation*}
    Hence for any integer $l$,
    the $l$-th moment of $\mu_\alpha$ is a trigonometric polynomial in the variable $\alpha$ of degree less than or equal to $|l|$.
    Moreover, the coefficients of this polynomials are given by derivatives of powers of $f$ due to Cauchy's formula.

    Our main technical tool is the Aleksandrov Disintegration Theorem which asserts that 
    \begin{equation}
        \label{eq:AleksandrovDesintegrationThm}
        m = \ic \mu_\alpha\, dm(\alpha)
    \end{equation}
    holds true in the sense that
    \begin{equation*}
        \ic G\, dm = \ic \ic G(z)\, d\mu_\alpha(z)\, dm(\alpha)
    \end{equation*}
    for any integrable function $G$ on the unit circle.
    In other words, for any given analytic self-mapping $f$ of the unit disk,
    the Lebesgue measure is the average of the Aleksandrov-Clark measures $\{\mu_\alpha\}$ of $f.$
    
    Before showing the main application of the Aleksandrov Disintegration Theorem in our context,
    we mention a basic fact of inner functions, which is just a restatement of Löwner's Lemma.
    Recall that Löwner's Lemma claims that if $f$ is an inner function such that $f(0) = 0,$ then
    $\measure{f^{-1}(E)} = \measure{E}$ for any measurable set $E \subseteq \circle$
    (see for instance~\cite[Corollary~1.5]{ref:DoeringMane}).
    \begin{lemma}
        \label{lemma:DeconjugateInnerFunction}
        Let $f$ be an inner function with $f(0)=0$. 
        \begin{enumerate}[(a)]
            \item
            \label{lemma:item:DeconjugateInnerFunction}
            Let $G$ be an integrable function on $\circle$.
            Then
            \begin{equation*}
                \ic G(f(z))\, dm(z) = \ic G(z)\, dm(z).
            \end{equation*}
            
            \item
            \label{lemma:item:IteratesCovariance}
            Let $k<j$ be positive integers.
            Then 
            \begin{equation*}
                \ic \overline{f^k} f^j\, dm = f'(0)^{j-k}.
            \end{equation*}
        \end{enumerate}
    \end{lemma}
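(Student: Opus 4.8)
The plan is to read part~(a) as the integral form of Löwner's Lemma and then to deduce part~(b) from~(a) by exploiting the composition structure of the iterates.

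\emph{Part (a).} Löwner's Lemma, quoted just before the statement, says $\measure{f^{-1}(E)} = \measure{E}$ for every Borel set $E \subseteq \circle$, i.e.\ the push-forward measure $f_* m$ equals $m$. The change-of-variables formula for push-forwards then gives
\[
    \ic G(f(z))\, dm(z) = \ic G\, d(f_* m) = \ic G\, dm
\]
for every $m$-integrable $G$. If one prefers a self-contained argument, the identity $f_* m = m$ can instead be obtained by comparing Fourier--Stieltjes coefficients: testing against $G(z) = z^l$, the mean value property applied to the bounded analytic function $z\mapsto f(z)^l$ together with $f(0)=0$ gives $\ic f(z)^l\, dm(z) = f(0)^l = 0 = \ic z^l\, dm(z)$ for $l\geq 1$; the case $l=0$ is trivial and $l\leq -1$ follows by conjugation, so $f_* m$ and $m$ are finite measures with the same coefficients and hence coincide, and~(a) follows for arbitrary $G$ by density. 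Here $f(z)^l$ denotes the $l$-th power, in contrast to the iterate $f^n$.

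\emph{Part (b).} Every iterate $f^k$ is again an inner function with $f^k(0)=0$, so part~(a) applied to $f^k$ reads $\ic H(f^k(z))\, dm(z) = \ic H(z)\, dm(z)$ for every integrable $H$. Writing $f^j = f^{j-k}\circ f^k$ and putting $H(w) = \overline{w}\, f^{j-k}(w)$, we have $\overline{f^k(z)}\, f^j(z) = H(f^k(z))$ for a.e.\ $z\in\circle$, so
\[
    \ic \overline{f^k}\, f^j\, dm = \ic H(z)\, dm(z) = \ic \overline{z}\, f^{j-k}(z)\, dm(z).
\]
This last integral equals the coefficient of $z$ in the power series of $f^{j-k}$ at the origin (its constant term vanishes since $f^{j-k}(0)=0$), namely $(f^{j-k})'(0)$; and the chain rule together with $f(0)=0$ gives $(f^{j-k})'(0) = f'(0)^{j-k}$.

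The argument is routine and I do not expect a real obstacle. The only points needing care are keeping the iterate notation $f^n$ separate from ordinary powers, justifying (if one does not simply invoke Löwner's Lemma) that agreement of Fourier--Stieltjes coefficients forces $f_* m = m$, and noting that~(a) transfers from $f$ to $f^k$ because every iterate of $f$ is itself an inner function fixing the origin.
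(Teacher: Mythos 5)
Your proof is correct and follows essentially the same route as the paper: part (a) is Löwner's Lemma read as invariance of $m$ under $f$ (the paper passes through characteristic functions and density, you phrase it as a push-forward identity, which is the same content), and part (b) applies (a) to the inner function $f^k$ and identifies $\ic \overline{z} f^{j-k}(z)\, dm(z)$ with $(f^{j-k})'(0) = f'(0)^{j-k}$ via Cauchy's formula and the chain rule. The extra Fourier-coefficient argument you offer for (a) is a fine alternative but not needed.
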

    \begin{proof}[Proof of Lemma~\ref{lemma:DeconjugateInnerFunction}]
        Assume that $G$ is the characteristic function of a measurable set $E \subset \circle$.
        Since $m(f^{-1} (E)) = m (E)$, the identity \eqref{lemma:item:DeconjugateInnerFunction} follows.
        The result for general integrable functions holds because of density of linear combinations of characteristic functions.
        Using \eqref{lemma:item:DeconjugateInnerFunction} and Cauchy formula, we have
        \begin{equation*}
            \ic \overline{f^k} f^j\, dm = \ic \overline{z} f^{j-k} (z)\, dm(z) = f'(0)^{j-k},
        \end{equation*}
        where the last equality follows from the chain rule.
    \end{proof}

    When proving Theorem~\ref{thm:InnerFunctionsCLT}, we will need to estimate integrals of products of iterates of an inner function $f.$
    We will use the Aleksandrov Disintegration Theorem to integrate factors of these products separately.
    The following lemma is a particular example of this procedure.
    However, the reader will easily see how to apply the same technique to more general products of iterates.
    The reason to choose this particular example is not only for clarity,
    but also because it will have a direct application in studying the $\Lp{2}$ norms of partial sums in Section~\ref{sec:AuxiliaryResults}.
    \begin{lemma}
        \label{lemma:IteratesProduct}
        Let $f$ be an inner function with $f(0)=0$.
        For $k =1,2,\ldots,p$, let $n_k, j_k$,  be positive integers such that 
        \begin{equation}
            \label{eq:SeparationHpthTwoIteratesProd}
            \max \{ n_k , j_k \} < \min \{n_{k+1} , j_{k+1} \} , \quad  k=1,\ldots,p-1 .
        \end{equation}
        Then 
        \begin{equation}
            \label{eq:IteratesProduct}
            \int_{\circle} \prod_{k=1}^p f^{n_k} \overline{f^{j_k}} dm = \prod_{k=1}^p \int_{\circle}  f^{n_k} \overline{f^{j_k}} dm . 
        \end{equation}
    \end{lemma}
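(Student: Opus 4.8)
The plan is to prove~\eqref{eq:IteratesProduct} by induction on $p$, peeling off the block $f^{n_1}\overline{f^{j_1}}$ at each step; the only tool required is the Aleksandrov Desintegration Theorem~\eqref{eq:AleksandrovDesintegrationThm}, applied to suitable iterates of $f$. Since replacing every pair $(n_k,j_k)$ by $(j_k,n_k)$ preserves~\eqref{eq:SeparationHpthTwoIteratesProd} and merely conjugates both sides of~\eqref{eq:IteratesProduct}, I may assume $n_1\leq j_1$ and set $d=j_1-n_1$. By~\eqref{eq:SeparationHpthTwoIteratesProd}, $n_1$ is the smallest among all the indices $n_k,j_k$, whereas $n_k>j_1$ and $j_k>j_1$ for every $k\geq 2$. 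The case $p=1$ is trivial, so assume $p\geq 2$ and that the statement holds for products of $p-1$ blocks.

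First I would disintegrate with respect to the inner function $f^{n_1}$. As $f^{n_1}(0)=0$, its Aleksandrov-Clark measures $\{\mu_\alpha\}$ are probability measures, $\mu_\alpha$ being carried by $(f^{n_1})^{-1}(\{\alpha\})$. Hence, for $\mu_\alpha$-almost every $z$ one has $f^{n_1}(z)=\alpha$, and since $n_1$ is the minimal index it follows that $f^{n_k}(z)=f^{n_k-n_1}(\alpha)$ and $f^{j_k}(z)=f^{j_k-n_1}(\alpha)$ for every $k$. Thus the integrand in~\eqref{eq:IteratesProduct} is constant on the support of $\mu_\alpha$, and~\eqref{eq:AleksandrovDesintegrationThm} yields
\begin{equation*}
    \ic \prod_{k=1}^p f^{n_k}\overline{f^{j_k}}\,dm
    = \ic \alpha\,\overline{f^{d}(\alpha)}\prod_{k=2}^p f^{n_k-n_1}(\alpha)\,\overline{f^{j_k-n_1}(\alpha)}\,dm(\alpha),
\end{equation*}
where the first block has contributed $f^{n_1-n_1}(\alpha)\,\overline{f^{j_1-n_1}(\alpha)}=\alpha\,\overline{f^{d}(\alpha)}$.

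If $d=0$, then $\alpha\,\overline{f^{0}(\alpha)}=\alpha\overline{\alpha}=1$, so the right-hand side above is a product of $p-1$ blocks with indices $n_k-n_1,j_k-n_1\geq 1$; combining the inductive hypothesis with the second part of Lemma~\ref{lemma:DeconjugateInnerFunction} — which gives $\ic f^{n_k-n_1}\overline{f^{j_k-n_1}}\,dm=\ic f^{n_k}\overline{f^{j_k}}\,dm$ and, trivially, $\ic f^{n_1}\overline{f^{j_1}}\,dm=1$ since $f^{n_1}$ is inner — proves~\eqref{eq:IteratesProduct}. If $d\geq 1$, I would disintegrate the last integral once more, now with respect to $f^{d}$; writing $\{\nu_\beta\}$ for the Aleksandrov-Clark measures of $f^{d}$, for $\nu_\beta$-almost every $\alpha$ one has $f^{d}(\alpha)=\beta$, so that $\overline{f^{d}(\alpha)}=\overline{\beta}$ and, using $n_k-n_1>d$, $j_k-n_1>d$ and $n_1+d=j_1$, also $f^{n_k-n_1}(\alpha)=f^{n_k-j_1}(\beta)$ and $f^{j_k-n_1}(\alpha)=f^{j_k-j_1}(\beta)$ for $k\geq 2$. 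The only factor not constant on the support of $\nu_\beta$ is $\alpha$ itself, and conjugating~\eqref{eq:FirstACMeasureMoment} applied to $f^{d}$ (whose derivative at $0$ is $f'(0)^{d}$, and $\nu_\beta$ being a positive measure) gives $\ic \alpha\,d\nu_\beta(\alpha)=\overline{f'(0)}^{d}\,\beta$. After the cancellation $\beta\overline{\beta}=1$ one is left with
\begin{equation*}
    \ic \prod_{k=1}^p f^{n_k}\overline{f^{j_k}}\,dm
    = \overline{f'(0)}^{d}\ic \prod_{k=2}^p f^{n_k-j_1}\overline{f^{j_k-j_1}}\,dm,
\end{equation*}
and the inductive hypothesis together with the second part of Lemma~\ref{lemma:DeconjugateInnerFunction} completes the argument, the latter giving $\overline{f'(0)}^{d}=\ic f^{n_1}\overline{f^{j_1}}\,dm$ and $\ic f^{n_k-j_1}\overline{f^{j_k-j_1}}\,dm=\ic f^{n_k}\overline{f^{j_k}}\,dm$.

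I expect the main difficulty to be organisational rather than computational: the argument is \emph{soft}, but one must pick the iterates of $f$ against which to disintegrate so that, on every fibre, all but at most one of the factors collapse to constants — which is precisely what dictates the two-step scheme, first against $f^{n_1}$ and then against $f^{d}$ — and one must track the complex conjugations carefully so as to produce $\overline{f'(0)}$, and not $f'(0)$, at the end. A slightly shorter variant would disintegrate just once, against $f^{j_1}=f^{\max\{n_1,j_1\}}$, and evaluate the remaining single-factor integral using the transformation rule $(f^{n_1})_{\ast}\mu_\alpha(f^{j_1})=\mu_\alpha(f^{j_1-n_1})$ for Aleksandrov-Clark measures; I do not take this route here, as that identity is not among the facts recorded above.
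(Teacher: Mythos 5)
Your argument is correct and follows essentially the same route as the paper: induction on $p$, peeling off the first block by disintegrating against Aleksandrov--Clark measures and evaluating the single surviving non-constant factor via the first moment formula~\eqref{eq:FirstACMeasureMoment}, so that $\ic f^{n_1}\overline{f^{j_1}}\,dm=\overline{f'(0)}^{\,j_1-n_1}$ factors out. The only cosmetic difference is that you perform the initial normalisation $z\mapsto f^{n_1}(z)$ by a separate disintegration with respect to $f^{n_1}$, whereas the paper invokes the measure-preserving property of part~\eqref{lemma:item:DeconjugateInnerFunction} of Lemma~\ref{lemma:DeconjugateInnerFunction} (Löwner's Lemma) and then disintegrates once against $f^{j_1-n_1}$ — these are the same computation.
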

    \begin{proof}[Proof of Lemma~\ref{lemma:IteratesProduct}]
        We argue by induction on $p$.
        Assume \eqref{eq:IteratesProduct} holds for $p-1$ products.
        We can assume $n_1 < j_1$.
        By part \eqref{lemma:item:DeconjugateInnerFunction} of Lemma \ref{lemma:DeconjugateInnerFunction} we have
        \begin{equation*}
            \ic \prod_{k=1}^p f^{n_k} \overline{f^{j_k}}\, dm
            = \ic z \overline{f^{j_1 - n_1} (z)} \prod_{k=2}^p f^{n_k - n_1} (z) \overline{f^{j_k - n_1} (z)}\, dm(z).  
        \end{equation*}
        Let $\{\mu_\alpha\colon \alpha \in \circle \}$ be the Aleksandrov-Clark measures of the inner function $f^{j_1 - n_1}$.
        The Aleksandrov Disintegration Theorem~\eqref{eq:AleksandrovDesintegrationThm}
        and the fact that the mass of $\mu_\alpha$ is carried by the set  $\set{z\in\circle\colon f^{j_1 - n_1}(z)=\alpha}$
        gives that last integral can be written as
        \begin{equation*}
            \ic \ic z \overline{\alpha} \prod_{k=2}^p f^{n_k - j_1}(\alpha) \overline{f^{j_k - j_1}(\alpha)}\, d\mu_\alpha(z)\, dm(\alpha). 
        \end{equation*}
        By~\eqref{eq:FirstACMeasureMoment} and part \eqref{lemma:item:IteratesCovariance} of Lemma \ref{lemma:DeconjugateInnerFunction},
        we have
        \begin{equation*}
            \ic z\, d\mu_\alpha(z) = {\overline{f'(0)}}^{j_1 - n_1} \alpha
            = \alpha \ic f^{n_1} \overline{f^{j_1}}\, dm . 
        \end{equation*}
        Hence
        \begin{equation*}
            \ic \prod_{k=1}^p f^{n_k} \overline{f^{j_k}}\, dm
            = \left(\ic f^{n_1} \overline{f^{j_1}}\, dm\right) \ic \prod_{k=2}^p f^{n_k - j_1} \overline{f^{j_k - j_1}}\, dm
        \end{equation*}
        and we can apply the inductive assumption.
        One more use of the invariance property of part \eqref{lemma:item:DeconjugateInnerFunction}
        of Lemma \ref{lemma:DeconjugateInnerFunction} finishes the proof.
    \end{proof}
    Linear combinations of iterates of inner functions are not independent,
    but under suitable conditions their correlations decay fast enough, which is sufficient for many applications. The following theorem states a simple condition under which
    the moduli squared of linear combinations of iterates of an inner function are uncorrelated.
    \begin{theorem}
        \label{thm:SquaredModuliPartialSums}
        Let $f$ be an inner function with $f(0)=0$.
        Let ${\A}_k$, $k =1,2,\ldots,p$, be finite collections of positive integers such that 
        \begin{equation}
            \label{eq:SeparationHpthPartialSums}
            \max \{ n\colon n \in {\A}_k \} < \min \{n\colon n \in {\A}_{k+1} \} , \quad k=1,\ldots , p-1.
        \end{equation}
        Consider 
        \begin{equation*}
            \xi_k = \sum_{n \in {\A}_k} a_n f^n.
        \end{equation*}
        Then 
        \begin{equation*}
            \ic \prod_{k=1}^p |\xi_k|^2\, dm = \prod_{k=1}^p \ic  |\xi_k|^2\, dm . 
        \end{equation*}
    \end{theorem}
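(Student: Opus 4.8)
The plan is to expand the moduli squared and reduce everything to Lemma~\ref{lemma:IteratesProduct}. Since each $\A_k$ is finite, write
\[
    |\xi_k|^2 = \xi_k \overline{\xi_k} = \sum_{n, j \in \A_k} a_n \overline{a_j}\, f^n \overline{f^j},
\]
so that, multiplying these finite sums together,
\[
    \prod_{k=1}^p |\xi_k|^2 = \sum_{\substack{n_k, j_k \in \A_k \\ k=1,\ldots,p}} \left( \prod_{k=1}^p a_{n_k} \overline{a_{j_k}} \right) \prod_{k=1}^p f^{n_k} \overline{f^{j_k}}.
\]
The point is that the separation hypothesis~\eqref{eq:SeparationHpthPartialSums} on the blocks $\A_k$ is inherited by each multi-index appearing here: for $n_k, j_k \in \A_k$ and $n_{k+1}, j_{k+1} \in \A_{k+1}$ we have
\[
    \max\{n_k, j_k\} \leq \max \A_k < \min \A_{k+1} \leq \min\{n_{k+1}, j_{k+1}\},
\]
which is precisely condition~\eqref{eq:SeparationHpthTwoIteratesProd} of Lemma~\ref{lemma:IteratesProduct} (note that the lemma imposes no condition relating $n_k$ and $j_k$, so the diagonal terms $n_k = j_k$, for which $f^{n_k}\overline{f^{j_k}} = 1$ almost everywhere, cause no trouble).

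Next I would integrate term by term. This is legitimate because the sum over multi-indices is finite and each factor $f^{n}$ is unimodular almost everywhere on $\circle$, so all the functions involved are bounded. Applying Lemma~\ref{lemma:IteratesProduct} to each multi-index gives
\[
    \ic \prod_{k=1}^p |\xi_k|^2\, dm = \sum_{\substack{n_k, j_k \in \A_k \\ k=1,\ldots,p}} \left( \prod_{k=1}^p a_{n_k} \overline{a_{j_k}} \right) \prod_{k=1}^p \ic f^{n_k} \overline{f^{j_k}}\, dm.
\]
The right-hand side is a sum of products in which the $k$-th factor depends only on $(n_k, j_k) \in \A_k \times \A_k$; hence it factors as
\[
    \prod_{k=1}^p \left( \sum_{n, j \in \A_k} a_n \overline{a_j} \ic f^{n} \overline{f^{j}}\, dm \right) = \prod_{k=1}^p \ic |\xi_k|^2\, dm,
\]
which is the desired identity.

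There is no genuine obstacle beyond this bookkeeping; the entire content sits in Lemma~\ref{lemma:IteratesProduct}, and the only things to check carefully are that the separation of blocks passes to the individual indices and that the diagonal terms are harmless. Alternatively one could prove the statement directly by induction on $p$, peeling off the factor $|\xi_p|^2$ and invoking the Aleksandrov Desintegration Theorem~\eqref{eq:AleksandrovDesintegrationThm} for the inner functions $f^{j_1 - n_1}$ exactly as in the proof of Lemma~\ref{lemma:IteratesProduct}; but reducing to the lemma already at hand is shorter and avoids repeating that argument.
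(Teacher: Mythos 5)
Your proof is correct and follows essentially the same route as the paper's: expand each $|\xi_k|^2$ as a finite linear combination of terms $f^{n_k}\overline{f^{j_k}}$ with $n_k, j_k \in \A_k$, note that the block separation hypothesis~\eqref{eq:SeparationHpthPartialSums} yields exactly condition~\eqref{eq:SeparationHpthTwoIteratesProd}, and apply Lemma~\ref{lemma:IteratesProduct} term by term. Your explicit verification that the diagonal terms are harmless and the final refactoring of the sum of products into a product of sums simply spell out steps the paper leaves implicit.
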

    \begin{proof}
         At almost every point of the unit circle we have
        \begin{equation*}
            |\xi_k|^2
            = \sum_{n \in {\A}_k} |a_n|^2 + \sum( \overline{a_n} a_j \overline{f^n} f^j + \overline{a_j} a_n \overline{f^j} f^n ),
        \end{equation*}
        where the last sum is taken over all indices $n,j \in {\A}_k$ with $j>n$.
        Hence $\prod |\xi_k|^2$ can be written as a linear combination of terms of the form 
        \begin{equation*}
            \prod f^{n_k} \overline{f^{j_k}},
        \end{equation*}
        where $n_k, j_k \in {\A}_k$.
        Observe that \eqref{eq:SeparationHpthPartialSums} gives
        the assumption \eqref{eq:SeparationHpthTwoIteratesProd} in Lemma \ref{lemma:IteratesProduct}.
        Now Lemma \ref{lemma:IteratesProduct} finishes the proof.
    \end{proof}

    \section{Auxiliary results}
    \label{sec:AuxiliaryResults}
    This section is devoted to collect some results which will be used in the proof of  Theorem~\ref{thm:InnerFunctionsCLT}.
    
    Given an inner function $f$ on the unit disk and a positive integer $n,$
    we denote $f^{-n} = \overline{f^n}.$
    By applying Lemma~\ref{lemma:IteratesProduct} several times we can estimate expressions of the form
    \begin{equation*}
        \ic f^{\varepsilon_1 n_1} f^{\varepsilon_2 n_2} \dots f^{\varepsilon_k n_k}\, dm,
    \end{equation*}
    where $n_1,\ldots,n_k$ are positive integers and $\varepsilon_1,\ldots,\varepsilon_k \in \set{+1,-1}.$
    The following lemma states the corresponding results for some particular configurations with $4$ factors
    that will turn out to be useful later on.
    \begin{lemma}
        \label{lemma:FourFactors}
        Let $f$ be an inner function with $f(0)=0$ which is not a rotation.
        For $k =1,2,3,4$, let  $n_k$ be positive integers and $\varepsilon_k \in \set{+1,-1}$. Consider 
        \begin{equation*}
                I = I(\varepsilon_1 n_1 , \varepsilon_2 n_2 , \varepsilon_3 n_3 , \varepsilon_4 n_4)
                = \ic f^{\varepsilon_1 n_1} f^{\varepsilon_2 n_2} f^{\varepsilon_3 n_3} f^{\varepsilon_4 n_4}\, dm. 
            \end{equation*}
        \begin{enumerate}[(a)]
            \item
            \label{lemma:item:FourFactorsCancellation}
           Assume $\varepsilon_1 = - \varepsilon_2$, $ \varepsilon_3 = \varepsilon_4 $ and  $\max \{n_1 , n_2 \} < \min \{ n_3 , n_4 \}$. Then $I=0$. 
            
            \item
            \label{lemma:item:FourArbitraryFactors}
           Assume $n_1 < n_2 < n_3 < n_4$. If $\varepsilon_1 \varepsilon_2 = \varepsilon_3 \varepsilon_4 = -1$ then we have 
         $|I| = |f'(0)|^{n_2 - n_1 + n_4 - n_3}$. If $\varepsilon_1 =  \varepsilon_2$ and $n_3 - n_2 \geq 3$, then there exists a constant $C=C(f) >0$, independent of the indices $n_1, n_2, n_3, n_4$,
            such that $|I| \leq C |f'(0)|^{n_4 - n_1}$.

         \item
            \label{lemma:item:FirstFactorSquared}
            Let $n_1 < n_2 < n_3$ be positive integers with $n_2 - n_1 \geq 3$. Then there exists a constant $C=C(f) >0$, independent of the indices $n_1, n_2, n_3$,
            such that
            \begin{equation*}
               \left| \ic (f^{\varepsilon_1 n_1})^2  f^{\varepsilon_2 n_2} f^{\varepsilon_3 n_3}\, dm\right| \leq C |f'(0)|^{n_3 - n_1}.
            \end{equation*}

\item
            \label{lemma:item:nouFirstFactorSquared}
            Let $n_1 \leq n_2 < n_3$ be positive integers with $n_3 - n_2 \geq 3$. Then there exists a constant $C=C(f) >0$, independent of the indices $n_1, n_2, n_3$,
            such that
            \begin{equation*}
               \left| \ic f^{\varepsilon_1 n_1}  f^{\varepsilon_2 n_2} (f^{\varepsilon_3 n_3})^2\, dm\right| \leq C |f'(0)|^{n_3 - n_1}.
            \end{equation*}

             \end{enumerate}
    \end{lemma}

\begin{proof}[Proof of Lemma~\ref{lemma:FourFactors}]

We can assume $n_1 < n_2$ and $\varepsilon_1 = 1$. Since Lebesgue measure is invariant under $f$ we have
$$
I = \ic z f^{\varepsilon_2 (n_2 - n_1)}(z) f^{ \varepsilon_3 (n_3 - n_1)}(z) f^{\varepsilon_4 (n_4 - n_1)}(z) dm (z).
$$
Let $\{\mu_\alpha\colon \alpha \in \circle \}$ be the Aleksandrov-Clark measures of the inner function $f^{n_2 - n_1}$.
        The Aleksandrov Disintegration Theorem~\eqref{eq:AleksandrovDesintegrationThm}
        and the fact that the mass of $\mu_\alpha$ is carried by the set  $\set{z\in\circle\colon f^{n_2 - n_1}(z)=\alpha}$
        give that 
        \begin{equation*}
I =  \ic \ic z {\alpha}^{\varepsilon_2} f^{ \varepsilon_3 (n_3 - n_2)}(\alpha) f^{\varepsilon_4 (n_4 - n_2)}(\alpha) \, d\mu_\alpha(z)\, dm(\alpha). 
        \end{equation*}
        By~\eqref{eq:FirstACMeasureMoment} and part \eqref{lemma:item:IteratesCovariance} of Lemma \ref{lemma:DeconjugateInnerFunction},
        we have
        \begin{equation*}
            \ic z\, d\mu_\alpha(z) = {\overline{f'(0)}}^{n_2 - n_1} \alpha . 
        \end{equation*}
We deduce that
$$
I=  {\overline{f'(0)}}^{n_2 - n_1}  \ic  {\alpha}^{ 1 + \varepsilon_2} f^{ \varepsilon_3 (n_3 - n_2)}(\alpha) f^{\varepsilon_4 (n_4 - n_2)}(\alpha) \,  dm(\alpha). 
$$
If $\varepsilon_2 = -1$ and $\varepsilon_3 =  \varepsilon_4$ last integral vanishes and we obtain the statement in (a). If $\varepsilon_2 = -1$ and $\varepsilon_3 = -  \varepsilon_4$, the modulus of the last integral is $|f'(0)|^{n_4 - n_3}$ and we deduce the first part of (b). Assume now $\varepsilon_2 =1$. Then 
$$
I = {\overline{f'(0)}}^{n_2 - n_1}  \ic  {z}^{ 2} f^{ \varepsilon_3 (n_3 - n_2)}(z) f^{\varepsilon_4 (n_4 - n_2)}(z) \,  dm(z).
$$
Let $\{\sigma_\alpha\colon \alpha \in \circle \}$ be the Aleksandrov-Clark measures of the inner function $f^{n_3 - n_2}$. The Aleksandrov Disintegration Theorem~\eqref{eq:AleksandrovDesintegrationThm}
        and the fact that the mass of $\sigma_\alpha$ is carried by the set  $\set{z\in\circle\colon f^{n_3 - n_2}(z)=\alpha}$
        give that
$$
I =   {\overline{f'(0)}}^{n_2 - n_1} \ic \ic z^2 {\alpha}^{\varepsilon_3} f^{ \varepsilon_4 (n_4 - n_3)}(\alpha) \, d\sigma_\alpha (z)\, dm(\alpha). 
$$
By~\eqref{eq:FirstACMeasureMoment}
$$
\ic z^2 d\sigma_\alpha (z) = {\alpha}^2 {\overline{f'(0)}}^{n_3 - n_2} + \alpha b,
$$
and $|b| \leq C |f' (0)|^{n_3 - n_2}$ because $n_3 - n_2 \geq 3$. We deduce that
$$
|I| \leq C |f' (0)|^{n_3 - n_1} \max \left\{\left|\ic \alpha^{l} f^{\varepsilon_4 (n_4 - n_3)} (\alpha) \, dm (\alpha)\right| \right\},
$$
where the maximum is taken over all positive integers $l$ with $|l| \leq 3$. This gives the second part of statement (b). Similar arguments give parts (c) and (d).
    \end{proof}

    The next statement collects convenient estimates of $\Lp{2}$ and $\Lp{4}$ norms of linear combinations of iterates of an inner function (see~\cite{ref:NicolauSolerGibert2022} for details).
    Here, given $u,v\in\C,$ we denote by $\langle u,v \rangle = \Real(\overline{u}v)$
    the standard scalar product of $u$ and $v$ when considered as elements of $\R^2.$
    \begin{theorem}
        \label{thm:L2NormComparability}
        Let $f$ be an inner function with $f(0)=0$ which is not a rotation
        and let $\{a_n \}$ be a sequence of complex numbers with $\sum_n |a_n|^2 < \infty$.
        Consider 
        \begin{equation*}
            \xi = \sum_{n=1}^\infty a_n f^n
        \end{equation*}
        and 
        \begin{equation*}
            {\sigma}^2  = \sum_{n=1}^\infty |a_n|^2 + 2 \Real \sum_{k=1}^{\infty} f'(0)^k \sum_{n=1}^\infty \overline{a_n} a_{n+k} . 
        \end{equation*}
        
        \begin{enumerate}[(a)]
            \item
            \label{thm:item:L2NormEstimate}
            We have $\norm{\xi}{2}^2 ={\sigma}^2  $ and 
            \begin{equation*}
                \kappa^{-1} \sum_{n=1}^\infty |a_n|^2 \leq {\sigma}^2 \leq  \kappa \sum_{n=1}^\infty |a_n|^2 , 
            \end{equation*}
            where $\kappa= (1+|f'(0)|)(1- |f'(0)|)^{-1}$.
            
            \item
            \label{thm:item:ScalarProduct}
            For any $t \in \C$ we have
            \begin{equation*}
                \ic \langle t, \xi  \rangle^2\, dm = \frac{1}{2} |t|^2 {\sigma}^2. 
            \end{equation*}
            
            \item
            \label{thm:item:L4NormEstimate}
            There exists a constant $C=C(f) >0$ independent of the sequence $\{a_n\}$,
            such that $\norm{\xi}{4} \leq C \norm{\xi}{2}.$
        \end{enumerate}
    \end{theorem}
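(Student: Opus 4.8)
The plan is to treat the three parts in turn, reducing every assertion about the infinite series to the partial sums $\xi_N = \sum_{n=1}^N a_n f^n$, which converge to $\xi$ in $\Lp{2}(\circle)$. For part~\eqref{thm:item:L2NormEstimate} I would expand $\norm{\xi_N}{2}^2$ and evaluate the cross terms by part~\eqref{lemma:item:IteratesCovariance} of Lemma~\ref{lemma:DeconjugateInnerFunction} and its conjugate; the two cross sums that appear are complex conjugates of one another, so $\norm{\xi_N}{2}^2 = \sigma_N^2$. Writing $c = f'(0)$ and $A_N(\theta) = \sum_{n=1}^N a_n e^{in\theta}$, using $\sum_n \overline{a_n}a_{n+k} = \frac{1}{2\pi}\int_0^{2\pi}|A_N(\theta)|^2 e^{-ik\theta}\,d\theta$ and summing the geometric series in $k$, one obtains
\[
\sigma_N^2 = \frac{1}{2\pi}\int_0^{2\pi} |A_N(\theta)|^2\, \Real\!\left(\frac{1+ce^{-i\theta}}{1-ce^{-i\theta}}\right) d\theta = \frac{1}{2\pi}\int_0^{2\pi} |A_N(\theta)|^2\,\frac{1-|c|^2}{|1-ce^{-i\theta}|^2}\, d\theta .
\]
The Poisson-type kernel in the last integrand takes values in $[\kappa^{-1},\kappa]$, and $\frac{1}{2\pi}\int_0^{2\pi}|A_N|^2\,d\theta = \sum_{n=1}^N|a_n|^2$ by Parseval, so the two-sided estimate follows; letting $N\to\infty$ (the series defining $\sigma^2$ converges absolutely) gives $\norm{\xi}{2}^2 = \sigma^2$ and the bound for the full series.

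For part~\eqref{thm:item:ScalarProduct} I would use the identity $u^2 = \tfrac12(u^2+v^2) + \tfrac12(u^2-v^2)$ with $u+iv = \overline{t}\,\xi$, which gives the pointwise equality $\langle t,\xi\rangle^2 = \tfrac12|t|^2|\xi|^2 + \tfrac12\Real(\overline{t}^{\,2}\xi^2)$ a.e. on $\circle$. Integrating and invoking part~\eqref{thm:item:L2NormEstimate}, the claim reduces to $\ic \xi^2\, dm = 0$. This holds because $\xi$ is, on $\D$, the $H^2$-limit of finite linear combinations of iterates of $f$; hence $\xi^2\in H^1$ and $\ic\xi^2\,dm$ equals the value of $\xi^2$ at the origin, which is $0$ since $\xi(0) = \sum_n a_n f^n(0) = 0$.

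For part~\eqref{thm:item:L4NormEstimate}, part~\eqref{thm:item:L2NormEstimate} reduces matters to proving $\norm{\xi}{4}^4 \le C(f)\big(\sum_n|a_n|^2\big)^2$, and by $\Lp2$-convergence one may assume the sum finite. Expanding $|\xi|^4 = \xi^2\overline{\xi}^{\,2}$ gives
\[
\norm{\xi}{4}^4 = \sum_{n_1,n_2,j_1,j_2} a_{n_1}a_{n_2}\overline{a_{j_1}a_{j_2}}\, I, \qquad I = I(n_1,n_2,j_1,j_2) = \ic f^{n_1}f^{n_2}\overline{f^{j_1}}\,\overline{f^{j_2}}\, dm ,
\]
and I would bound each $I$ by cases according to coincidences among the four indices and their relative order. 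Any coincident pair of indices contributes a factor $|f^k|^2 = 1$ a.e.\ and collapses $I$ to an integral of at most three iterates, controlled by part~\eqref{lemma:item:IteratesCovariance} of Lemma~\ref{lemma:DeconjugateInnerFunction} or by part~\eqref{lemma:item:FirstFactorSquared} of Lemma~\ref{lemma:FourFactors}; when the four indices are distinct, part~\eqref{lemma:item:FourArbitraryFactors} of Lemma~\ref{lemma:FourFactors} applies. In every case $|I| \le C|f'(0)|^{\delta}$, where, in terms of the sorted indices $m_1\le m_2\le m_3\le m_4$, one may take $\delta = (m_2-m_1)+(m_4-m_3)$, with $\delta = m_3-m_1$ in the borderline case $m_4-m_3\le 2$. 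Substituting these bounds and summing, the gaps appearing in $\delta$ are summed as geometric series $\sum_{g\ge 0}|f'(0)|^g$, the remaining gap (the uncontrolled middle gap in the typical case, or a gap bounded by $2$ in the borderline case) is handled by Cauchy--Schwarz $|a_p||a_q|\le\tfrac12(|a_p|^2+|a_q|^2)$ together with $\sum_m|a_m||a_{m+g}|\le\sum_m|a_m|^2$; this yields $\norm{\xi}{4}^4\le C(f)\big(\sum_n|a_n|^2\big)^2$.

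The technical heart, and the main obstacle, is part~\eqref{thm:item:L4NormEstimate}: the exhaustive case analysis for $I(n_1,n_2,j_1,j_2)$ and the verification that the decay provided by Lemma~\ref{lemma:FourFactors} always suffices to recombine the four sums into $\big(\sum|a_n|^2\big)^2$. This, along with the proof of Lemma~\ref{lemma:FourFactors} itself and of the comparability $C^{-1}<\sigma_N/S_N<C$, is carried out in full in~\cite{ref:NicolauSolerGibert2022}, so here I would only indicate the reductions above and refer there for the details.
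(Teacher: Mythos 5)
The paper does not actually prove this theorem\,---\,it is quoted from~\cite{ref:NicolauSolerGibert2022}\,---\,and your outline is correct and follows the same route: parts~(a) and~(b) are complete as written (the Herglotz-kernel identity $\sigma_N^2=\frac{1}{2\pi}\int_0^{2\pi}|A_N(\theta)|^2\frac{1-|c|^2}{|1-ce^{-i\theta}|^2}\,d\theta$ yields exactly the sharp constants $\kappa^{\pm 1}$, and $\ic \xi^2\,dm=\xi(0)^2=0$ settles~(b)), while part~(c) correctly reduces to the four-index case analysis of Lemma~\ref{lemma:FourFactors} together with geometric summation over the controlled gaps and Cauchy--Schwarz over the uncontrolled one, which is precisely what is carried out in the reference. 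The only imprecision is the claim that \emph{any} coincident pair of indices contributes a factor $|f^k|^2=1$: this holds only when a conjugated index meets an unconjugated one, whereas a same-type coincidence produces a factor $(f^{\pm n})^2$ and must instead be handled by part~\eqref{lemma:item:FirstFactorSquared} of Lemma~\ref{lemma:FourFactors}, as your subsequent sentence in fact indicates.
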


    In the proof of Theorem~\ref{thm:InnerFunctionsCLT} we will need to consider partial sums
    of the series of iterates of an inner function and also partial sums restricted to certain subsets of indices $\{\A(k)\}.$
    This auxiliary result, proved in~\cite{ref:NicolauSolerGibert2022}, will allow us to compare the $\Lp{2}$ norms of these partial sums.
    \begin{lemma}
        \label{lemma:PartialVariances}
        Let $\{a_n\}$ be a sequence of complex numbers and $\lambda \in \C$ with  $|\lambda| < 1.$
        Consider the sequence
        \begin{equation*}
            {\sigma}^2_N = \sum_{ n=1}^N |a_n|^2 + 2 \Real \sum_{k=1}^{N-1} {\lambda}^k \sum_{n=1}^{N-k}  \overline{a_n} a_{n+k},
            \quad  N=1,2 \ldots
        \end{equation*}
        For $N>1,$ let $\A(j) = \A(j,N),$ for $j=1,\ldots , M= M(N),$
        be pairwise disjoint sets of consecutive positive integers smaller than $N.$
        Consider
        \begin{equation*}
            {\sigma}^2 ({\A}(j)) =
            \sum_{ n \in {\A}(j)} |a_n|^2 + 2 \Real \sum_{k \geq 1} {\lambda}^k \sum_{n \in {\A}(j)\colon  n+k \in {\A}(j)} \overline{a_n} a_{n+k},
            \quad  j=1,2 \ldots , M. 
        \end{equation*}
        Let $\A = \cup {\A}(j).$
        Assume
        \begin{equation}
            \label{eq:PVFirst}
                \lim_{N \to \infty} \frac{\sum_{ n \in \A} |a_n|^2}{\sum_{ n=1}^N |a_n|^2} = 1
        \end{equation}
        and
        \begin{equation}
            \label{eq:PVSec}
            \lim_{j \to \infty} \sup_{N>1: j \leq M(N)} \frac{\max \{ |a_n|^2\colon n \in {\A}(j, N) \} } {\sum_{n \in {\A}(j, N) } |a_n|^2} = 0.
        \end{equation}
        Then 
        \begin{equation*}
            \lim_{N \to \infty} \frac{\sum_{j=1}^{M(N)} {\sigma}^2 ({\A}(j,N))}{{\sigma}^2_N} = 1.
        \end{equation*}
    \end{lemma}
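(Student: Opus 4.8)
Here is how I would approach the proof.

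The plan is to realise $\sigma^2(\A(j))$ — and, more generally, the analogous quantity $\sigma^2(B)$ attached to any finite set $B$ of positive integers — as the squared $\Lp{2}(\circle)$ norm of a partial sum of iterates of a conveniently chosen inner function, and then to argue geometrically in $\Lp{2}(\circle)$. First I would take $f(z)=z(z+\lambda)(1+\overline{\lambda}z)^{-1}$ when $\lambda\neq0$ and $f(z)=z^2$ when $\lambda=0$; in either case $f$ is inner, is not a rotation, and satisfies $f(0)=0$, $f'(0)=\lambda$. Applying Lemma~\ref{lemma:DeconjugateInnerFunction} and Theorem~\ref{thm:L2NormComparability} to the sequence obtained from $\{a_n\}$ by zeroing the coefficients outside $B$ (a finitely supported, hence square summable, sequence), one gets
\[
    \sigma^2(B) = \norm{\sum_{n\in B} a_n f^n}{2}^2, \qquad \kappa^{-1}\sum_{n\in B}|a_n|^2 \leq \sigma^2(B) \leq \kappa\sum_{n\in B}|a_n|^2 ,
\]
for every finite $B$, with $\kappa=(1+|\lambda|)(1-|\lambda|)^{-1}$. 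Since the $\A(j)$ are pairwise disjoint sets of consecutive integers they are linearly ordered, so I would relabel them so that $\max\A(j)<\min\A(j+1)$. Writing $\xi=\sum_{n=1}^N a_n f^n$, $\xi_j=\sum_{n\in\A(j)}a_nf^n$ and $\xi(\A)=\sum_j\xi_j=\sum_{n\in\A}a_nf^n$, hypothesis \eqref{eq:PVFirst} with the comparability above gives $\norm{\xi-\xi(\A)}{2}^2\leq\kappa\sum_{n\leq N,\,n\notin\A}|a_n|^2=o(\sigma_N^2)$, whence $\norm{\xi(\A)}{2}^2=\sigma_N^2+o(\sigma_N^2)$. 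Expanding the square, one has
\[
    \sum_{j=1}^M \sigma^2(\A(j)) = \norm{\xi(\A)}{2}^2 - 2\Real\sum_{1\leq j<l\leq M}\ic\overline{\xi_j}\,\xi_l\,dm ,
\]
so the whole statement reduces to proving $\sum_{j<l}\big|\ic\overline{\xi_j}\xi_l\,dm\big| = o(\sigma_N^2)$ as $N\to\infty$.

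Next I would estimate a single correlation. Fix $j<l$ and set $\mu_j=\max\{|a_n|\colon n\in\A(j)\}$. Since every index of $\A(j)$ is smaller than every index of $\A(l)$, part~\eqref{lemma:item:IteratesCovariance} of Lemma~\ref{lemma:DeconjugateInnerFunction} gives
\[
    \ic\overline{\xi_j}\,\xi_l\,dm = \sum_{m\in\A(j)}\sum_{n\in\A(l)}\overline{a_m}a_n\,\lambda^{n-m} = \sum_{k\geq1}\lambda^k\sum_{\substack{m\in\A(j)\\ m+k\in\A(l)}}\overline{a_m}a_{m+k}.
\]
The inner sum is empty unless $k\geq d:=\min\A(l)-\max\A(j)$, and for $k=d+i$ the admissible $m$ lie in $[\max\A(j)-i,\max\A(j)]$, so there are at most $i+1$ of them. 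Bounding the coefficients by $\mu_j$ and $\mu_l$ and summing $\sum_{i\geq0}(i+1)|\lambda|^i=(1-|\lambda|)^{-2}$, I obtain
\[
    \Big|\ic\overline{\xi_j}\,\xi_l\,dm\Big| \leq \frac{|\lambda|^{d}}{(1-|\lambda|)^2}\,\mu_j\mu_l \leq \frac{|\lambda|^{l-j}}{(1-|\lambda|)^2}\,\mu_j\mu_l ,
\]
where the last step uses $d\geq l-j$, which holds because the $l-j$ gaps $\min\A(i+1)-\max\A(i)$ $(i=j,\dots,l-1)$ are each at least $1$. Summing over $j<l$ with $\mu_j\mu_l\leq\tfrac12(\mu_j^2+\mu_l^2)$ and the geometric series in $d=l-j$ then yields
\[
    \sum_{1\leq j<l\leq M}\Big|\ic\overline{\xi_j}\,\xi_l\,dm\Big| \leq \frac{|\lambda|}{(1-|\lambda|)^3}\sum_{j=1}^M\mu_j^2 .
\]
Finally, writing $\mu_j^2=\varepsilon_jS_j$ with $S_j=\sum_{n\in\A(j)}|a_n|^2$, one has $\varepsilon_j\in[0,1]$, $\varepsilon_j\to0$ by \eqref{eq:PVSec}, and $\sum_j S_j=\sum_{n\in\A}|a_n|^2\leq\kappa\sigma_N^2$. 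For $\delta>0$, choosing $j_0$ with $\varepsilon_j<\delta$ for $j\geq j_0$ gives $\sum_{j=1}^M\mu_j^2\leq\sum_{j<j_0}S_j+\delta\kappa\sigma_N^2$; since in the range of application of this lemma one has $\sum|a_n|^2=\infty$ (see \eqref{eq:CoefficientsL2Divergence}), so that $\sigma_N^2\to\infty$, while the first $j_0$ blocks eventually stay in a fixed bounded set of indices, the term $\sum_{j<j_0}S_j$ is $o(\sigma_N^2)$. Letting $\delta\to0$ gives $\sum_j\mu_j^2=o(\sigma_N^2)$, which is exactly the estimate needed above, so the lemma follows.

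The step I expect to be the real obstacle is the correlation bound of the second paragraph: the straightforward Cauchy--Schwarz estimate only gives $\big|\ic\overline{\xi_j}\xi_l\,dm\big|\leq(1-|\lambda|)^{-1}|\lambda|^{l-j}S_j^{1/2}S_l^{1/2}$, and summing this over $j<l$ is merely $O(\sigma_N^2)$, with no smallness at all. The decisive point is therefore to exploit that for each shift $k=d+i$ only $O(i)$ pairs of indices contribute, which upgrades $S_j^{1/2}S_l^{1/2}$ to the block maxima $\mu_j\mu_l$; hypothesis \eqref{eq:PVSec} is precisely what then makes $\sum_j\mu_j^2$ negligible. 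A secondary, more routine, difficulty is the Toeplitz-type argument in the last paragraph, where the limit over the block index has to be combined with $\sigma_N^2\to\infty$ in order to discard the finitely many initial blocks.
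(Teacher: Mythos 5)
The paper does not actually prove this lemma: it is quoted from \cite{ref:NicolauSolerGibert2022}, so there is no in-text argument to compare yours against. Judged on its own, your proof is sound and essentially self-contained. The realisation of $\sigma^2(B)$ as $\norm{\sum_{n\in B}a_nf^n}{2}^2$ for an inner function with $f'(0)=\lambda$ (equivalently, a direct verification that the quadratic form is $\kappa$-comparable to $\sum_{n\in B}|a_n|^2$), the reduction to showing $\sum_{j<l}|\ic\overline{\xi_j}\xi_l\,dm|=o(\sigma_N^2)$, and above all the counting refinement --- for shift $k=d+i$ only $i+1$ index pairs contribute, which upgrades the Cauchy--Schwarz bound $S_j^{1/2}S_l^{1/2}$ to the block maxima $\mu_j\mu_l$ and lets hypothesis \eqref{eq:PVSec} bite --- are all correct, and you rightly identify this last point as the crux, since plain Cauchy--Schwarz only yields $O(\sigma_N^2)$ with no smallness.

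The one soft spot is the final paragraph. You invoke $\sum|a_n|^2=\infty$ and the assertion that the first $j_0$ blocks ``eventually stay in a fixed bounded set of indices''; neither is among the lemma's stated hypotheses. This is partly the lemma's fault: the blocks $\A(j)=\A(j,N)$ depend on $N$, so the limit in \eqref{eq:PVSec} taken literally over $j$ alone is ambiguous. In the only use made of the lemma (Step~3 of the proof of Theorem~\ref{thm:InnerFunctionsCLT}, with the blocks of Lemma~\ref{lemma:BlockConstruction}), what is actually verified is that the ratio in \eqref{eq:PVSec} is at most $\varphi(N)^{7/8}$, i.e.\ it tends to $0$ uniformly in $j$ as $N\to\infty$; under that reading your argument simplifies to $\sum_j\mu_j^2\leq\bigl(\sup_j\varepsilon_j\bigr)\sum_jS_j=o(\sigma_N^2)$ directly, with no need for $j_0$ or for $\sigma_N^2\to\infty$. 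Under the literal reading with $N$-independent blocks, your Toeplitz-type splitting is the right device, but the divergence $\sum|a_n|^2=\infty$ is then genuinely needed (a fixed nonzero correlation coming from an early block with ratio close to $1$ would otherwise survive division by a bounded $\sigma_N^2$), so it should be recorded as an extra hypothesis rather than borrowed informally from ``the range of application''. Either way the mathematical content is right; just state explicitly which reading of \eqref{eq:PVSec} you are using.
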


    Another technical tool for the proof of Theorem~\ref{thm:InnerFunctionsCLT} is the following general version of Lemma~\ref{lemma:FourFactors} (see~\cite{ref:NicolauSolerGibert2022}). 
    It quantifies the correlation of iterates of an inner function when the number of iterations
    differ by a large amount.
    More concretely, if the minimum difference between the number of iterations on each iterate is at least $q,$ large enough,
    then we have certain control over correlations of up to $q$ different such iterates.
    Here we use the notation $f^{-n} = \overline{f^n}.$
    \begin{theorem}
        \label{thm:HigherCorrelations}
        Let $f$ be an inner function with $f(0)=0$ and $a = |f'(0)| < 1.$
        Let $1 \leq  k \leq q$ be positive integers.
        Let $\boldsymbol{\varepsilon} = \{\varepsilon_j\}_{j=1}^k$
        where $\varepsilon_j = 1$ or $\varepsilon_j = -1,$
        and let $\boldsymbol{n} = \{n_j\}_{j=1}^k$
        where $n_1 < n_2 < \ldots < n_k$ are positive integers with $n_{j+1} - n_j > q$ for any $j =1,2,\ldots , k-1.$ 
        Consider 
        \begin{equation*}
            I(\boldsymbol{\varepsilon},\boldsymbol{n}) = \left|\ \int_{\circle} \prod_{j=1}^k f^{\varepsilon_j n_j}\, dm \right| 
        \end{equation*}
        Then there exist constants $C= C(f)>0$,
        $q_0 = q_0 (f)>0$ independent of $\boldsymbol{\varepsilon}$ and of $\boldsymbol{n},$
        such that if $q \geq q_0$ we have    
        \begin{equation*}
            I(\boldsymbol{\varepsilon},\boldsymbol{n}) \leq C^k k! a^{ \Phi (\boldsymbol{\varepsilon},\boldsymbol{n}) }, k=1,2, \ldots,  
        \end{equation*}
        where $\Phi (\boldsymbol{\varepsilon},\boldsymbol{n}) = \sum_{j=1}^{k-1} \delta_j (n_{j+1} -n_{j}),$
        with $\delta_j \in \{0,1/2,1\}$ for any $j = 1, \ldots, k-1,$
        and with $\delta_1 = 1$ and $\delta_{k-1} \geq 1/2.$
        In addition, for $j = 2, \ldots, k-1$ the coefficient $\delta_j = 1$ if and only if $\delta_{j-1} = 0.$
        Furthermore, if $\delta_{j-1} > 0,$ the coefficient $\delta_j$
        depends on $\varepsilon_{j+1}, \ldots, \varepsilon_k$ and $n_j, \ldots, n_k$ for $j = 2, \ldots, k-1.$ 
    \end{theorem}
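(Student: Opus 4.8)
The plan is to prove the estimate by induction on $k$, peeling off one factor at a time using the Aleksandrov Desintegration Theorem, exactly as in the proof of Lemma~\ref{lemma:IteratesProduct}, but keeping careful track of how the exponents accumulate. The base cases $k=1,2,3,4$ are handled directly: $k=1$ gives $\ic f^{\pm n_1}\,dm = f'(0)^{n_1}$ or its conjugate (with $\delta$ vacuous), $k=2$ is part~\eqref{lemma:item:IteratesCovariance} of Lemma~\ref{lemma:DeconjugateInnerFunction} together with Cauchy's formula, and $k=3,4$ are covered by Lemma~\ref{lemma:FourFactors}, which already exhibits the shape of the exponent function $\Phi$ (one picks up $n_2-n_1$ from the first gap, possibly $n_4-n_3$ from the last, with the intermediate coefficient forced to $0$ when a square appears).

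For the inductive step I would write $\min\{n_1,\ldots\}=n_1$, factor out $f^{n_1}$ via part~\eqref{lemma:item:DeconjugateInnerFunction} of Lemma~\ref{lemma:DeconjugateInnerFunction} to rewrite the integral as
\begin{equation*}
    \ic z^{\varepsilon_1} \prod_{j=2}^k f^{\varepsilon_j(n_j-n_1)}(z)\,dm(z),
\end{equation*}
then apply the Desintegration Theorem~\eqref{eq:AleksandrovDesintegrationThm} with respect to the Aleksandrov--Clark measures $\{\mu_\alpha\}$ of the inner function $f^{n_2-n_1}$: since the mass of $\mu_\alpha$ sits on $\{f^{n_2-n_1}=\alpha\}$, the remaining factors $f^{\varepsilon_j(n_j-n_1)}$ for $j\geq 2$ become functions of $\alpha$ alone, namely $f^{\varepsilon_j(n_j-n_2)}(\alpha)$ (using $f^{n_2-n_1}(z)=\alpha$ to replace $f^{n_2-n_1}(z)$), and what is left to integrate against $d\mu_\alpha$ is the single monomial $z^{\varepsilon_1}$. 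The moment identities following~\eqref{eq:ACMeasureHerglotz}—in particular~\eqref{eq:FirstACMeasureMoment} and the second-moment formula—show that $\ic z^{\ell}\,d\mu_\alpha(z)$ is a trigonometric polynomial in $\alpha$ of degree $\leq|\ell|$ whose top coefficient has modulus $|f'(0)|^{(n_2-n_1)|\ell|}$ (here $\ell=\pm 1$, so degree $\leq 1$). Substituting this polynomial back and expanding, the integral over $\alpha$ becomes a bounded linear combination of integrals of the same form as the original but with one fewer factor and with $n_2-n_1$ or $2(n_2-n_1)$ extracted; the inductive hypothesis applied to the $(k-1)$-factor integrals then yields the bound, with the new gaps still exceeding $q$ and the combinatorial constant growing by a bounded factor, which accounts for the $C^k k!$.

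The main obstacle—and the reason the theorem is stated with the elaborate conclusion about the $\delta_j$—is \emph{bookkeeping of the exponent $\Phi$} rather than any single hard estimate. One must verify that the recursion on the coefficients $\delta_j$ is consistent: extracting a first power of $z$ contributes $\delta_1=1$; extracting a square (which happens precisely when two consecutive exponents in the reduced product coincide after the substitution, i.e. when $\varepsilon_2(n_2-n_2)$... forces a collision) kills the next coefficient, giving the rule "$\delta_j=1$ iff $\delta_{j-1}=0$"; and the terminal condition $\delta_{k-1}\geq 1/2$ comes from the fact that the very last reduction is a genuine two- or three-factor integral where Lemma~\ref{lemma:FourFactors}\eqref{lemma:item:FourArbitraryFactors} guarantees at least a half-power of the final gap (the case $n_k-n_{k-1}\leq 2$ being absorbed since $q\geq q_0$ is large). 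One also needs $q_0$ large enough that the error terms coming from the lower-order moments of $\mu_\alpha$ (which carry smaller powers of $|f'(0)|$ but multiply factors whose gaps are all $\geq q$) are genuinely dominated; this is where $q\geq q_0(f)$ enters, and it is the only quantitative threshold in the argument. The bound $\Phi(\boldsymbol{\varepsilon},\boldsymbol{n})\geq kq/4$ claimed in the introduction then follows because at least every other gap contributes a full factor and the rest contribute at least $1/2$, so on average each of the $k-1$ gaps contributes $\geq q/4$.
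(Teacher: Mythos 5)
First, note that the paper does not prove Theorem~\ref{thm:HigherCorrelations} here; it is quoted from~\cite{ref:NicolauSolerGibert2022}, so your plan can only be judged against the strategy that reference follows, which is indeed the one you describe: peel off the factor with the smallest index, desintegrate with respect to the Aleksandrov--Clark measures of $f^{n_2-n_1}$, and replace the leftover power of $z$ by its moment against $\mu_\alpha$. Your outline of the first step is correct (for $\ell=\pm1$ the moment is the single monomial $\overline{f'(0)}^{\,n_2-n_1}\alpha$, so no error terms appear there). The genuine gap is in every step after the first. When $\varepsilon_1=\varepsilon_2$ the reduced integrand carries $\alpha^{\pm2}$, and more generally after $j$ steps the accumulated power of $z$ can be as large as $j$; the $\ell$-th moment of $\mu_\alpha$ is then the full triangular polynomial $\sum_{m=1}^{\ell}\overline{\alpha}^{\,m}\ic h(z)^m\overline{z}^{\ell}\,dm(z)$ with $h=f^{\,p}$, $p=n_{j+1}-n_j$. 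Only the top coefficient ($m=\ell$) has modulus $a^{p\ell}$; the lower-order coefficients $\ic h^m\overline{z}^{\ell}\,dm$ with $m<\ell$ are merely bounded by $1$ a priori, and the whole difficulty of the theorem is to show that they still carry a decay of order $a^{p/2}$ once $p\geq q_0(f)$. That estimate is the source of the value $1/2$ in $\delta_j\in\{0,1/2,1\}$ and the only reason a threshold $q_0$ appears; your sketch instead attributes the half-powers to Lemma~\ref{lemma:FourFactors}\eqref{lemma:item:FourArbitraryFactors}, which contains no half-powers at all, and dismisses the lower-order moments as "error terms ... genuinely dominated" without saying why. Without this quantitative input the induction does not close.

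Two smaller but real inaccuracies: the factor $k!$ cannot come from "the combinatorial constant growing by a bounded factor" per step (that yields only $C^k$); it comes from the moment polynomial contributing up to $k$ terms at each of the $k$ reductions, so the number of terms multiplies by roughly $\ell\leq k$ at every stage. And the base case $k=1$ is $\ic f^{\pm n_1}\,dm=f^{\pm n_1}(0)=0$, not $f'(0)^{n_1}$ (harmless for the bound, since $\Phi$ is then an empty sum, but worth fixing). In short: right skeleton, but the central lemma that makes the bookkeeping of $\Phi$ and the condition $q\geq q_0$ work is missing.
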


    The last auxiliary result of this section will be used to simplify the proof of Theorem~\ref{thm:InnerFunctionsCLT}.
    Its proof is an elementary application of Cauchy-Schwarz's inequality (see~\cite{ref:NicolauSolerGibert2022}).
    \begin{lemma}
        \label{lemma:IntegralLimit}
        Let $\{f_n \}$, $\{g_n \}$ be two sequences of measurable functions defined at almost every point of the unit circle.
        Assume that there exists a constant $C>0$ such that the following conditions hold
        \begin{enumerate}[(a)]
            \item
            $\sup_n \|f_n \|_2 \leq C $ and 
            \begin{equation*}
                \lim_{n \to \infty} \ic f_n\, dm =1
            \end{equation*}
            
            \item
            $ g_n (z) > -C$ for almost every $z \in \circle $ and $\lim_{n \to \infty} \|g_n \|_2 =0.$ 
        \end{enumerate}
        Then 
        \begin{equation*}
            \lim_{n \to \infty} \ic f_n e^{-g_n}\, dm = 1. 
        \end{equation*}
    \end{lemma}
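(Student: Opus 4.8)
The plan is to isolate the ``main term'' of $e^{-g_n}$, namely the constant $1$, and to estimate the rest in $\Lp{2}(\circle)$. Since $g_n > -C$ almost everywhere, we have $0 < e^{-g_n} < e^C$ a.e., so $e^{-g_n}$ is bounded and $f_n e^{-g_n} \in \Lp{1}(\circle)$; thus all the integrals below are well defined. Writing
\begin{equation*}
    \ic f_n e^{-g_n}\, dm = \ic f_n\, dm + \ic f_n\left(e^{-g_n} - 1\right) dm,
\end{equation*}
hypothesis (a) tells us that the first summand tends to $1,$ so it suffices to prove that the second summand tends to $0.$

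By the Cauchy--Schwarz inequality and hypothesis (a),
\begin{equation*}
    \left|\ic f_n\left(e^{-g_n} - 1\right) dm\right| \leq \|f_n\|_2\, \|e^{-g_n} - 1\|_2 \leq C\, \|e^{-g_n} - 1\|_2,
\end{equation*}
so the whole matter reduces to showing $\|e^{-g_n}-1\|_2 \to 0,$ and for this I would combine hypothesis (b) with the elementary pointwise inequality
\begin{equation*}
    |e^{-x} - 1| \leq e^{C} |x|, \qquad x > -C.
\end{equation*}
This inequality is checked by cases: for $x \geq 0$ one has $0 \leq 1 - e^{-x} \leq x \leq e^C|x|$ (the middle bound because $e^{-x}$ is convex and lies above its tangent line $1-x$ at the origin), while for $-C < x < 0,$ setting $t = -x \in (0,C),$ one has $e^{-x} - 1 = e^{t} - 1 = \int_0^t e^s\, ds \leq e^{C} t = e^{C}|x|.$ Applying it with $x = g_n(z)$ for almost every $z \in \circle$ gives $|e^{-g_n} - 1| \leq e^C |g_n|$ a.e., hence $\|e^{-g_n}-1\|_2 \leq e^C \|g_n\|_2 \to 0$ by hypothesis (b). Combining the three displays yields $\lim_{n\to\infty} \ic f_n e^{-g_n}\, dm = 1.$

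There is no serious obstacle in this argument; the only point requiring a little care is that the hypothesis on $g_n$ is one-sided ($g_n > -C,$ with no upper bound), so one cannot naively bound $|e^{-g_n}-1|$ by a fixed multiple of $|g_n|$ with a constant controlling the region where $g_n$ is large and positive. This is precisely why the case $x \geq 0$ above is treated via the bound $1 - e^{-x} \leq x,$ which holds for all $x \geq 0$ regardless of size; the factor $e^C$ is only needed to absorb the contribution of the region where $g_n$ is negative.
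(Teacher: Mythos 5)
Your proof is correct, and it follows exactly the route the paper indicates for this lemma: split off $\ic f_n\,dm$, control $\ic f_n(e^{-g_n}-1)\,dm$ by Cauchy--Schwarz, and reduce to $\|e^{-g_n}-1\|_2\to 0$ via the pointwise bound $|e^{-x}-1|\le e^C|x|$ for $x>-C$. Your remark on the one-sided hypothesis is exactly the right point of care; nothing is missing.
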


    \section{Proof of the Central Limit Theorem}
    \label{sec:ProofOfCLT}
    The proof of Theorem~\ref{thm:InnerFunctionsCLT} relies on splitting the partial sum
    \begin{equation*}
        \sum_{n=1}^N a_n f^n = \sum_k (\xi_k + \eta_k )
    \end{equation*}
    into certain blocks of consecutive terms which depend on $N$ and that are of two alternating types: $\xi_k$ and $\eta_k$.
    In other words, we will have a block $\eta_k$ between blocks $\xi_k$ and $\xi_{k+1}.$ The main idea in this construction relies on balancing two counteracting properties. On the one hand $\|\sum \eta_k \|_2 / \sigma_N$ must be small so that $\sum \eta_k$ becomes irrelevant. On the other hand the number of terms of each block $\eta_k$ must be large so that the correlations between different blocks $\xi_k$ decay sufficiently fast.
    This procedure was used by M.~Weiss in order to prove the Law of Iterated Logarithm for lacunary series~\cite{ref:WeissLILLacunarySeries}
    and is detailed in our context in the next lemma. Let $|\A|$ denote the number of elements of a set $\A$ of integers.
    \begin{lemma}
        \label{lemma:BlockConstruction}
        Consider an inner function $f$ with $f(0) = 0$ which is not a rotation and a sequence $\{a_n\}$ of complex numbers.
        Denote
        \begin{equation*}
            \sigma_N^2
            = \sum_{n=1}^{N} |a_n|^2 + 2 \Real \sum_{k=1}^{N-1} f'(0)^k \sum_{n=1}^{N-k} \overline{a_n} a_{n+k},
        \end{equation*}
        \begin{equation*}
            S_N^2 = \sum_{n=1}^N |a_n|^2
        \end{equation*}
        and assume that there exists a nonincreasing function $\varphi$ with $\lim_{N\to\infty} \varphi(N) = 0$ such that
        \begin{equation}
            \label{eq:QuantitativeGrowthCondition}
            \sup \{|a_n|^2\colon 1 \leq n \leq N\} \leq \varphi(N) S_N^2.
        \end{equation}
        If $N$ is large enough, then we can choose indices $0 \leq M_k < N_{k+1} < M_{k+1} \leq N$ for $0 \leq k \leq Q_N-1,$
        with the possible exception that $N_{Q_N} = M_{Q_N} = N,$
        such that if we define the blocks of consecutive integers
        \begin{equation}
            \label{eq:IndexBlockDefinition}
            \A(k) = \{n \in \N\colon M_{k-1} < n \leq N_k\}, \quad \B(k) = \{n \in \N\colon N_k < n \leq M_k\}, \quad 1 \leq k \leq Q_N
        \end{equation}
        and the block sums
        \begin{equation}
            \label{eq:BlockSumsDefinition}
            \xi_k = \sum_{n\in\A(k)} a_n f^n , \quad \eta_k = \sum_{n \in \B(k)} a_n f^n , \quad 1 \leq k \leq Q_N, 
        \end{equation}
        it holds that,
        \begin{equation}
            \label{eq:NumberOfLargeBlocks}
            \lim_{N\to\infty} Q_N\varphi(N)^{1/8} = \frac{1}{2},
        \end{equation}
        \begin{equation}
            \label{eq:NormOfLargeBlock}
         \varphi(N)^{1/8} \sigma_N^2 \lesssim   \norm{\xi_k}{2}^2 \lesssim \varphi(N)^{1/8} \sigma_N^2,
        \end{equation}
        \begin{equation}
            \label{eq:SumOfShortBlocks}
           \frac{1}{{\sigma}_N} \norm{\sum_{n=1}^N a_n f^n - \sum_{k=1}^{Q_N} \xi_k}{2} \lesssim \varphi (N) ^{1/16}
        \end{equation}
        and
        \begin{equation}
            \label{eq:NumberOfTermsPerBlock}
            |\A(k)| \geq \varphi(N)^{-7/8}, \quad |\B(k)| \geq \frac{1}{2} \varphi(N)^{-1/2}, \quad  k=1,2,\ldots, Q_N,
        \end{equation}
        with the possible exception that $\B(Q_N)$ may be empty.
    \end{lemma}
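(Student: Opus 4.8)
The plan is to construct the blocks greedily from left to right, choosing at each step the length of the current $\xi$-block and then the length of the following $\eta$-block so that the required lower bounds on cardinalities and the upper bound on $\|\xi_k\|_2^2$ are met with room to spare. Write $\varepsilon = \varphi(N)$ for brevity (this is small when $N$ is large). The natural target size for an $\A(k)$-block is to accumulate coefficients until the partial variance $S$-increment $\sum_{n \in \A(k)} |a_n|^2$ first reaches roughly $\varepsilon^{1/8} S_N^2$; by the quantitative growth hypothesis~\eqref{eq:QuantitativeGrowthCondition} each single term contributes at most $\varepsilon S_N^2$, so this overshoots by at most $\varepsilon S_N^2$, giving $\sum_{n \in \A(k)} |a_n|^2 \asymp \varepsilon^{1/8} S_N^2$ and, combined with Theorem~\ref{thm:L2NormComparability}\eqref{thm:item:L2NormEstimate} applied to the block (more precisely the $\A(k)$-restricted variance, comparable to $\sum_{\A(k)}|a_n|^2$ up to the factor $\kappa$), the bound~\eqref{eq:NormOfLargeBlock}. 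The same hypothesis forces $|\A(k)| \geq \varepsilon^{1/8} S_N^2 / \sup_{n\le N}|a_n|^2 \geq \varepsilon^{1/8}/\varepsilon = \varepsilon^{-7/8}$, which is the first inequality in~\eqref{eq:NumberOfTermsPerBlock}. For the $\B(k)$-blocks we instead prescribe the cardinality directly: take $|\B(k)| = \lceil \tfrac12 \varepsilon^{-1/2}\rceil$, which gives the second inequality in~\eqref{eq:NumberOfTermsPerBlock} for free, and then $\sum_{n \in \B(k)} |a_n|^2 \leq |\B(k)|\,\varepsilon S_N^2 \lesssim \varepsilon^{1/2} S_N^2$.

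Next I would count the blocks and control the error term. Since each pair $\A(k)\cup\B(k)$ carries $S$-variance at least $\asymp \varepsilon^{1/8}S_N^2$ and we stop once the remaining indices up to $N$ carry variance $o(S_N^2)$, the number $Q_N$ of complete blocks satisfies $Q_N \asymp \varepsilon^{-1/8}$; a careful bookkeeping of the overshoot (at most one extra term, i.e. at most $\varepsilon S_N^2$, per block, which is negligible compared to $\varepsilon^{1/8}S_N^2$) sharpens this to the precise limit~\eqref{eq:NumberOfLargeBlocks} with constant $\tfrac12$ — the $\tfrac12$ coming from the normalization of the target increment, which one picks to be exactly $\tfrac12\varepsilon^{1/8}S_N^2$ rather than $\varepsilon^{1/8}S_N^2$. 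For~\eqref{eq:SumOfShortBlocks}, the difference $\sum_{n=1}^N a_n f^n - \sum_k \xi_k$ is exactly $\sum_k \eta_k$ plus the terms beyond $M_{Q_N}$. The $\eta_k$ blocks satisfy the separation hypothesis~\eqref{eq:SeparationHpthPartialSums}, so by Theorem~\ref{thm:SquaredModuliPartialSums} (in its two-factor instance) the $|\eta_k|^2$ are pairwise uncorrelated, hence $\|\sum_k \eta_k\|_2^2 = \sum_k \|\eta_k\|_2^2 \leq \kappa \sum_k \sum_{n\in\B(k)}|a_n|^2 \lesssim Q_N \cdot \varepsilon^{1/2} S_N^2 \asymp \varepsilon^{-1/8}\varepsilon^{1/2}S_N^2 = \varepsilon^{3/8} S_N^2$; the tail beyond $M_{Q_N}$ is $o(S_N^2)$ by construction. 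Dividing by $\sigma_N^2 \asymp S_N^2$ and taking square roots gives an exponent $3/16$, comfortably stronger than the required $1/16$ in~\eqref{eq:SumOfShortBlocks}, so any reasonable choice of constants works.

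The main obstacle is making the greedy construction actually halt by index $N$ and simultaneously satisfy all four conclusions with a single consistent choice of the stopping threshold; the tension is that larger $\A(k)$-blocks make~\eqref{eq:NormOfLargeBlock} worse but~\eqref{eq:NumberOfTermsPerBlock} and~\eqref{eq:SumOfShortBlocks} easier, and vice versa, so one must verify that the exponent $1/8$ for the target increment sits in the admissible window determined by all the inequalities — it does, precisely because the hypothesis~\eqref{eq:QuantitativeGrowthCondition} is quantitative with the \emph{same} $\varphi$ governing both the maximal term and (via the pairing) the block count. A secondary technical point is that $\varphi$ is only assumed nonincreasing with $\varphi(N)\to 0$, not e.g. regularly varying, so all estimates must be made using monotonicity alone: within a block $\A(k)\subset\{1,\dots,N\}$ one bounds $\sup_{n\in\A(k)}|a_n|^2 \leq \varphi(M_{k-1}+1) S_N^2 \leq \varphi(1)S_N^2$ crudely, but for the lower bound on $|\A(k)|$ one uses $\sup_{n\le N}|a_n|^2 \le \varphi(N)S_N^2$, which is exactly the direction monotonicity gives for free. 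Once the threshold is fixed and $N$ is taken large enough that $\varphi(N)$ is below an absolute constant, everything else is routine: replace the $S$-restricted variances by the genuine $\sigma^2(\A(k))$ using Theorem~\ref{thm:L2NormComparability} and Lemma~\ref{lemma:PartialVariances} where needed, and collect constants.
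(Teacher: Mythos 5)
Your construction is essentially the paper's: greedy blocks of consecutive indices calibrated so that each $\A$-block carries $\ell^2$-mass $\asymp\varphi(N)^{1/8}S_N^2$ (hence at least $\varphi(N)^{-7/8}$ terms, by~\eqref{eq:QuantitativeGrowthCondition}), separated by $\B$-blocks of $\asymp\varphi(N)^{-1/2}$ terms, with Theorem~\ref{thm:L2NormComparability}\eqref{thm:item:L2NormEstimate} converting $\ell^2$-mass into $\Lp{2}$ norms. (Your direct cardinality prescription for $\B(k)$ is in fact simpler than the paper's pigeonhole selection of a minimal sub-block and gives the same bound $\sum_{n\in\B(k)}|a_n|^2\lesssim\varphi(N)^{1/2}S_N^2$.) Two of your steps, however, do not work as written.

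First, the appeal to Theorem~\ref{thm:SquaredModuliPartialSums} to conclude $\|\sum_k\eta_k\|_2^2=\sum_k\|\eta_k\|_2^2$ is a misapplication: that theorem asserts that the functions $|\eta_k|^2$ are uncorrelated, i.e.\ $\ic|\eta_k|^2|\eta_j|^2\,dm=\ic|\eta_k|^2\,dm\,\ic|\eta_j|^2\,dm$, which is a fourth-moment statement; it says nothing about the cross terms $\ic\overline{\eta_k}\eta_j\,dm$, which by Lemma~\ref{lemma:DeconjugateInnerFunction}\eqref{lemma:item:IteratesCovariance} equal $\sum_{n\in\B(k)}\sum_{m\in\B(j)}\overline{a_n}a_mf'(0)^{m-n}$ and do not vanish. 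The bound you want is nevertheless true: apply Theorem~\ref{thm:L2NormComparability}\eqref{thm:item:L2NormEstimate} to the coefficient sequence supported on $\bigcup_k\B(k)$ (this is how the paper treats the tail in~\eqref{eq:TailNorm}), or settle for the triangle inequality $\|\sum\eta_k\|_2\leq\sum\|\eta_k\|_2\lesssim Q_N\varphi(N)^{1/4}\sigma_N\lesssim\varphi(N)^{1/8}\sigma_N$, which already suffices. Relatedly, your exponent $3/16$ applies only to $\sum\eta_k$: the leftover indices after the last complete block carry $\ell^2$-mass up to $\asymp\varphi(N)^{1/8}S_N^2$ (one incomplete period), contributing $\asymp\varphi(N)^{1/16}\sigma_N$ in $\Lp{2}$, so this term is the binding one in~\eqref{eq:SumOfShortBlocks} and must be quantified rather than dismissed as $o(S_N^2)$.

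Second, your normalization for~\eqref{eq:NumberOfLargeBlocks} is inverted. If each period $\A(k)\cup\B(k)$ consumes $\ell^2$-mass $\approx\tau$ and the periods essentially exhaust $S_N^2$, then $Q_N\approx S_N^2/\tau$; with your choice $\tau=\tfrac12\varphi(N)^{1/8}S_N^2$ this gives $Q_N\varphi(N)^{1/8}\to2$, not $\tfrac12$. You need mass $\approx2\varphi(N)^{1/8}S_N^2$ per period, which is exactly what the paper arranges by building primitive blocks $\J(k)$ of mass $\approx\varphi(N)^{1/8}S_N^2$ and pairing them, carving $\B(k)$ out of every second one; with the larger target the bounds $|\A(k)|\geq\varphi(N)^{-7/8}$ and~\eqref{eq:NormOfLargeBlock} still hold since they absorb constants. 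Both defects are repairable, but as written the argument proves the wrong constant in~\eqref{eq:NumberOfLargeBlocks} and rests a key $\Lp{2}$ estimate on a theorem that does not deliver it.
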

    
    \noindent In particular, observe that~\eqref{eq:NumberOfLargeBlocks} and~\eqref{eq:NumberOfTermsPerBlock} imply that
    \begin{equation*}
        Q_N \leq \varphi(N)^{-1/8} \leq \varphi(N)^{-1/2} \leq |\B(k)|, \quad 1 \leq k \leq Q_N-1, 
    \end{equation*}
     if $N$ is sufficiently large. Hence the number of blocks $\xi_k$ is much smaller than the number of (possibly null) terms in the blocks $\eta_k$. 
   
    \begin{proof}
        We may assume that $N$ is large enough for $\varphi(N) < 1$ to be already small. Recall that, by Theorem~\ref{thm:L2NormComparability}, we have that $S_N^2 \simeq \sigma_N^2.$ 
        First we define an auxiliary sequence of indices $0 \leq J_k \leq J_{k+1} \leq N$ as follows.
        Pick $J_0 = 0$ and let $J_1$ be the smallest positive integer such that
        \begin{equation*}
            \sum_{n=1}^{J_1} |a_n|^2 \geq \varphi(N)^{1/8} S_N^2. 
        \end{equation*}
        Now define the auxiliary block of consecutive integers
        \begin{equation*}
            \J(1) = \{n\in\N\colon J_0 < n \leq J_1\}.
        \end{equation*}
        It is clear that the minimality of $J_1$ and the bound~\eqref{eq:QuantitativeGrowthCondition}
        on the coefficients imply that
        \begin{equation*}
            \sum_{n\in\J(1)} |a_n|^2 \leq \varphi(N)^{1/8} S_N^2 + |a_{J_1}|^2 \leq (\varphi(N)^{1/8} + \varphi(N)) S_N^2. 
        \end{equation*}
        Assume that we have chosen $J_0,J_1,\ldots,J_{k-1}$ and
        defined the blocks of consecutive integers $\J(1),\ldots,\J(k-1).$
        Then pick $J_k$ to be the smallest positive integer such that
        \begin{equation*}
            \sum_{n=J_{k-1}+1}^{J_k} |a_n|^2 \geq \varphi(N)^{1/8} S_N^2, 
        \end{equation*}
        and let
        \begin{equation*}
            \J(k) = \{n\in\N\colon J_{k-1} < n \leq J_k\}.
        \end{equation*}
        As before, the minimality of $J_k$ and~\eqref{eq:QuantitativeGrowthCondition} give that
        \begin{equation}
            \label{eq:AuxiliaryBlockBound}
            \varphi(N)^{1/8} S_N^2 \leq \sum_{n\in\J(k)} |a_n|^2 \leq (\varphi(N)^{1/8} + \varphi(N)) S_N^2.
        \end{equation}
        We continue this process until we reach $J_{P_N}$ such that
        \begin{equation}
            \label{eq:ResidueBound}
            \sum_{n=J_{P_N}+1}^{N} |a_n|^2 < \varphi(N)^{1/8} S_N^2
        \end{equation}
        and denote $T_N^2 = \sum_{n=J_{P_N}+1}^{N} |a_n|^2$ (see Figure~\ref{fig:JkBlocks}).
        Observe that by estimate~\eqref{eq:AuxiliaryBlockBound}, summing over $k,$ we have that 
        \begin{equation*}
            \varphi(N)^{1/8} S_N^2 P_N \leq S_N^2 - T_N^2 \leq (\varphi(N)^{1/8} + \varphi(N)) S_N^2 P_N.
        \end{equation*}
        Thus, since $\varphi(N) \to 0,$ by~\eqref{eq:ResidueBound} we get that the number $P_N$ of auxiliary blocks satisfies
        \begin{equation}
            \label{eq:NumberOfAuxiliaryBlocks}
            \lim_{N\to\infty} \varphi(N)^{1/8} P_N = 1.
        \end{equation}
        Also note that, since $|a_n|^2 \leq \varphi(N) S_N^2$ for $n \leq N,$
        we deduce from the lower bound in~\eqref{eq:AuxiliaryBlockBound} that $\varphi(N) S_N^2 |\J(k)| \geq \varphi(N)^{1/8} S_N^2,$ $1 \leq k \leq P_N$. 
        Hence, the number of indices in $\J(k)$ satisfies 
        \begin{equation}
            \label{eq:NumberOfTermsAuxiliaryBlock}
            |\J(k)|  \geq \varphi(N)^{-7/8}, \quad  1 \leq k \leq P_N.
        \end{equation}

        \begin{figure}
            \centering
            \begin{tikzpicture}
                \draw (0,0) rectangle ++(1,1);
                \node at (0.5,0.5) {$1$};
                \draw (1,0) rectangle ++(1,1);
                \node at (1.5,0.5) {$2$};
                \draw (2,0) rectangle ++(1,1);
                \node at (2.5,0.5) {$3$};
                \draw (3,0) rectangle ++(1,1);
                \node at (3.5,0.5) {$\dots$};
                \draw (4,0) rectangle ++(1,1);
                \node at (4.5,0.5) {$J_1$};
                \draw (5,0) rectangle ++(1,1);
                \node at (5.5,0.5) {$\dots$};
                \draw (6,0) rectangle ++(2,1);
                \node at (7,0.5) {$J_{P_N-1}+1$};
                \draw (8,0) rectangle ++(1,1);
                \node at (8.5,0.5) {$\dots$};
                \draw (9,0) rectangle ++(2,1);
                \node at (10,0.5) {$J_{P_N}$};
                \draw (11,0) rectangle ++(2,1);
                \node at (12,0.5) {$J_{P_N}+1$};
                \draw (13,0) rectangle ++(1,1);
                \node at (13.5,0.5) {$\dots$};
                \draw (14,0) rectangle ++(1,1);
                \node at (14.5,0.5) {$N$};
                \draw (0,1) rectangle ++(5,1);
                \node at (2.5,1.5) {$\mathcal{J}(1)$};
                \draw (5,1) rectangle ++(1,1);
                \node at (5.5,1.5) {$\dots$};
                \draw (6,1) rectangle ++(5,1);
                \node at (8.5,1.5) {$\mathcal{J}(P_N)$};
                \draw (11,1) rectangle ++(4,1);
                \node at (13,1.5) {coefficients in $T_N$};
            \end{tikzpicture}
            \caption{The blocks of indices $\J(k)$ are pairwise disjoint blocks of consecutive indices
                     that contain all positive integers up to $J_{P_N}.$}
            \label{fig:JkBlocks}
        \end{figure}

        Next, we modify the auxiliary blocks $\J(k)$ to obtain the blocks $\A(k)$ and $\B(k)$
        (see Figure~\ref{fig:AkBkBlocks}).
        Fix $1 \leq k \leq \lfloor P_N/2 \rfloor.$
        By \eqref{eq:NumberOfTermsAuxiliaryBlock},
        each block $\J(2k)$ has length larger than $\varphi(N)^{-7/8},$
        which is larger than $\varphi(N)^{-1/2}.$
        We pick $p_k$ smaller pairwise disjoint blocks of consecutive integers $\B(k,j)$ of lengths $ \lfloor \varphi(N)^{-1/2} \rfloor$. Observe that the blocks $\B(k,j)$ may not exhaust $J(2k)$. 
        Note that the estimate~\eqref{eq:NumberOfTermsAuxiliaryBlock} on the number of indices in $\J(2k),$ gives that we can get at least
        \begin{equation}
            \label{eq:NumberOfSmallBlocks}
            p_k \geq \frac{|\J(2k)|}{\varphi(N)^{-1/2}} \geq  \varphi(N)^{-3/8}
        \end{equation}
        such shorter blocks.
        Now we pick $\B(k)$ to be one of the blocks $\B(k,j)$ such that the sum
        \begin{equation*}
            \sum_{n\in\B(k,j)} |a_n|^2
        \end{equation*}
        is minimal.
        It turns out that
        \begin{equation}
            \label{eq:l2SumShortBlocks}
            \sum_{n\in\B(k)} |a_n|^2 \leq \frac{1}{p_k} \sum_{n\in\J(2k)} |a_n|^2 \leq 2 \varphi(N)^{1/2} S_N^2,
        \end{equation}
        where the last inequality follows from~\eqref{eq:AuxiliaryBlockBound} and~\eqref{eq:NumberOfSmallBlocks}.

        \begin{figure}
            \centering
            \begin{tikzpicture}
                \draw (0,1) rectangle ++(3,1);
                \node at (1.5,1.5) {$\mathcal{J}(1)$};
                \draw (3,1) rectangle ++(1,1);
                \node at (3.5,1.5) {$\mathcal{B}(\!1,\!1\!)$};
                \draw (4,1) rectangle ++(1,1);
                \node at (4.5,1.5) {$\mathcal{B}(\!1,\!2\!)$};
                \draw (5,1) rectangle ++(1,1);
                \node at (5.5,1.5) {$\mathcal{B}(\!1,\!3\!)$};
                \draw (6,1) rectangle ++(1,1);
                \node at (6.5,1.5) {$\mathcal{B}(\!1,\!4\!)$};
                \draw (7,1) rectangle ++(3,1);
                \node at (8.5,1.5) {$\mathcal{J}(3)$};
                \draw (10,1) rectangle ++(1,1);
                \node at (10.5,1.5) {$\mathcal{B}(\!2,\!1\!)$};
                \draw (11,1) rectangle ++(1,1);
                \node at (11.5,1.5) {$\mathcal{B}(\!2,\!2\!)$};
                \draw (12,1) rectangle ++(1,1);
                \node at (12.5,1.5) {$\mathcal{B}(\!2,\!3\!)$};
                \draw (13,1) rectangle ++(2,1);
                \node at (14,1.5) {$\mathcal{J}(5)$};

                \draw (0,0) rectangle ++(5,1);
                \node at (2.5,0.5) {$\mathcal{A}(1)$};
                \draw (5,0) rectangle ++(1,1);
                \node at (5.5,0.5) {$\mathcal{B}(1)$};
                \draw (6,0) rectangle ++(6,1);
                \node at (9,0.5) {$\mathcal{A}(2)$};
                \draw (12,0) rectangle ++(1,1);
                \node at (12.5,0.5) {$\mathcal{B}(2)$};
                \draw (13,0) rectangle ++(2,1);
                \node at (14,0.5) {$\mathcal{A}(3)$};
            \end{tikzpicture}
            \caption{In each block $\J(2k)$ consider smaller blocks $\B(k,j).$
                     Then, we choose $\B(k)$ to be one of these blocks
                     for which $\sum_{n\in\B(k,j)} |a_n|^2$ is minimal.
                     Finally, the blocks $\A(k)$ are the sets of indices between blocks $\B(k-1)$ and $\B(k).$}
            \label{fig:AkBkBlocks}
        \end{figure}

        We define now the sequences $M_k$ and $N_k.$
        For clarity, we will assume that $P_N$ is even, since the minor modifications when it is odd will be obvious.
        Set $Q_N = P_N /2$, $M_0 = 0$ and, $N_k = \min \B(k) - 1$, $M_k = \max \B(k) $ for $1 \leq k \leq Q_N,$ and 
        \begin{equation*}
            \A(k) = \{n\in\N\colon M_{k-1} < n \leq N_k\}, \qquad 1 \leq k \leq Q_N.
        \end{equation*}
        Note that~\eqref{eq:IndexBlockDefinition} is clear by the definition of $M_k$ and $N_k.$
        In addition, by the construction of $\B(k)$ for $1 \leq k \leq Q_N$ we have that
        \begin{equation*}
            |\B(k)| \geq \frac{1}{2} \varphi(N)^{-1/2} . 
        \end{equation*}
        Also, taking into account that each block $\A(k),$ $1 \leq k \leq Q_N$, contains some block $\J(i)$
        with $1 \leq i \leq P_N$ odd, estimate~\eqref{eq:NumberOfTermsAuxiliaryBlock} gives that 
        \begin{equation*}
            |\A(k)| \geq \varphi(N)^{-7/8}
        \end{equation*}
        and this proves~\eqref{eq:NumberOfTermsPerBlock}.
        After defining the sets of indices $\A(k)$ and $\B(k),$ the block sums $\xi_k$ and $\eta_k$
        are given by~\eqref{eq:BlockSumsDefinition} for $1 \leq k \leq Q_N.$
        Also note that for any $1 \leq k \leq Q_N$ we have $\A(k) \subset \J(2k-2) \cup \J(2k-1) \cup \J(2k)$ (where we consider $\J(0) = \emptyset$). Hence applying Theorem~\ref{thm:L2NormComparability} to $\xi_k$ we find
        \begin{equation*}
            \norm{\xi_k}{2}^2 \lesssim \sum_{n\in\J(i-1) \cup \J(i) \cup \J(i+1)} |a_n|^2 \lesssim \varphi(N)^{1/8} \sigma_N^2, \quad 1 \leq k \leq Q_N, 
        \end{equation*}
        which is property~\eqref{eq:NormOfLargeBlock}.
        
        When $P_N$ is even, the construction already gives that $Q_N = P_N/2,$ while 
        if $P_N$ is odd, then $Q_N = (P_N+1)/2.$
        In any of these two cases, using~\eqref{eq:NumberOfAuxiliaryBlocks} we get~\eqref{eq:NumberOfLargeBlocks}.
        The other differences that we would have when $P_N$ is odd are that we would have one more block $\A(k)$ than $\B(k)$
        and that the last index in the sequences $\{M_k\}$ and $\{N_k\}$
        would be $N_{Q_N}$ instead of $M_{Q_N}.$

       We are only left with checking~\eqref{eq:SumOfShortBlocks}. First denote the sums over indices
        that are not in any of the blocks $\xi_k$ and $\eta_k$, by
        \begin{equation*}
            R_N^2 = \sum_{n=M_{Q_N}+1}^N |a_n|^2, \qquad \rho_N = \sum_{n=M_{Q_N}+1}^N a_n f^n.
        \end{equation*}
        In particular, note that
        \begin{equation*}
            R_N^2 = \sum_{n=M_{Q_N}+1}^{J_{P_N}} |a_n|^2 + \sum_{n=J_{P_N}+1}^{N} |a_n|^2 \leq (2\varphi(N)^{1/8}+\varphi(N)) S_N^2.
        \end{equation*}
        Thus, using Theorem~\ref{thm:L2NormComparability} twice we see that
        \begin{equation}
            \label{eq:TailNorm}
            \norm{\rho_N}{2}^2 = \norm{\sum_{n=1}^N a_n f^n - \sum_{k=1}^{Q_N} (\xi_k + \eta_k )}{2}^2 \lesssim R_N^2 \lesssim  \varphi(N)^{1/8} \sigma_N^2.
        \end{equation}
        Note that
        \begin{equation*}
            \sum_{n=1}^N a_n f^n - \sum_{k=1}^{Q_N} \xi_k = \sum_{k=1}^{Q_N} \eta_k + \rho_N.
        \end{equation*}
        Then using~\eqref{eq:l2SumShortBlocks},~\eqref{eq:TailNorm} and~\eqref{eq:NumberOfLargeBlocks}, we deduce
        \begin{equation}
            \label{eq:2TailBoundExpanded}
            \begin{split}
                \norm{\sum_{n=1}^N a_n f^n - \sum_{k=1}^{Q_N} \xi_k}{2}  \leq & \sum_{k=1}^{Q_N} \|\eta_k\|_2 + \|\rho_N \|_2 \lesssim \\
                & \lesssim Q_N \varphi (N)^{1/4} \sigma_N  + \varphi (N)^{1/16} \sigma_N \lesssim \varphi (N)^{1/16} \sigma_N. 
            \end{split}
        \end{equation}
        This concludes the proof of~\eqref{eq:SumOfShortBlocks}.
        \end{proof}

    We use Lemma~\ref{lemma:BlockConstruction} to prove Theorem~\ref{thm:InnerFunctionsCLT}.
    The idea is that we will discard the blocks $\eta_k$ and the tail $\rho_N$ as their total $\Lp{2}$ norm is irrelevant
    compared to that of the full partial sum.
    Then, the correlation between the blocks $\xi_k$ will be controlled due to the length of the blocks $\eta_k$
    that separate them.
    In practice, this will imply that the blocks $\xi_k$ will behave almost as if they were independent.

    \begin{proof}[Proof of Theorem~\ref{thm:InnerFunctionsCLT}]
        For a given value of $N$ split the partial sum
        \begin{equation*}
            \sum_{n=1}^N a_n f^n
        \end{equation*}
        into the blocks $\xi_k$ and $\eta_k,$ with $1 \leq k \leq Q_N,$ given by Lemma~\ref{lemma:BlockConstruction}.
        Observe that, because of property~\eqref{eq:SumOfShortBlocks} and Chebyshev's inequality, we only need to see that
        \begin{equation*}
            T_N = \frac{\sqrt{2}}{\sigma_N} \sum_{k=1}^{Q_N} \xi_k
        \end{equation*}
        tends in distribution to a standard complex normal random variable as $N$ tends to infinity.
        We split the proof into several parts.

        \textit{1. The characteristic function of $T_N.$}
        By the Levi Continuity Theorem, it is sufficient to show that for any complex number $t$ we have 
        \begin{equation}
            \label{eq:FourierTransform}
            \varphi_N (t) = \ic e^{i \langle t , T_N \rangle }\, dm  \to e^{-|t|^2 / 2} ,
            \quad \text{ as } \quad N \to \infty.
        \end{equation}
        As before $\langle t,w \rangle = \Real (\overline{t} w)$ is the standard scalar product in the plane.
        Fix $t \in \C$. Our first step is to show the approximation 
        \begin{equation}
            \label{eq:FourierTransformApprox}
            \lim_{N \to \infty}  \left(\varphi_N (t) - \ic \prod_{k=1}^{Q_N} \left(1+ \frac{ i \sqrt{2}\langle t , \xi_k \rangle}{\sigma_N} \right)
            \exp{\left( - \frac{{\langle t , \xi_k \rangle}^2}{{\sigma_N}^2} \right)}\,  dm\right)   = 0.
        \end{equation}
        We start with some apriori estimates. Given $\delta >0$,
        consider the sets $E_k = E_k (\delta, N) =  \{z \in \circle\colon |\xi_k (z)| > \delta \sigma_N\}$, $k =1,2,\ldots , Q_N$.
        By part \eqref{thm:item:L4NormEstimate} of Theorem~\ref{thm:L2NormComparability} and estimate~\eqref{eq:NormOfLargeBlock}
        we have $\norm{\xi_k}{4}^4 \lesssim  \varphi(N)^{1/4} \sigma_N^4.$
        Chebyshev's inequality and estimate~\eqref{eq:NumberOfLargeBlocks} give 
        \begin{equation*}
            \sum_{k=1}^{Q_N} m(E_k) \lesssim \frac{\varphi(N)^{1/4} \sigma_N^4 Q_N}{{\delta}^4 \sigma_N^4} \lesssim \frac{\varphi(N)^{1/8}}{{\delta}^4}
        \end{equation*}
        if $N$ is sufficiently large.
        Given $\mu > 1$, consider the set
        \begin{equation*}
            E_0 = E_0 (\mu, t, N) = \set{z \in \circle\colon \sum_{k=1}^{Q_N} {\langle t , \xi_k (z) \rangle}^2 > \mu \sigma_N^2}.  
        \end{equation*}
        Part \eqref{thm:item:ScalarProduct} of Theorem~\ref{thm:L2NormComparability}, estimates~\eqref{eq:NumberOfLargeBlocks}, ~\eqref{eq:NormOfLargeBlock} and
        Chebyshev's inequality yield 
        \begin{equation*}
            m(E_0) \lesssim \frac{|t|^2 \varphi(N)^{1/8} \sigma_N^2 Q_N}{\mu \sigma_N^2} \lesssim  \frac{|t|^2}{\mu }, 
        \end{equation*}
        if $N$ is sufficiently large.
        Hence the set 
        \begin{equation*}
            E= E(\delta, \mu , t, N) =  \bigcup_{k=0}^{Q_N} E_k
        \end{equation*}
        satisfies
        \begin{equation}
        \label{E}
            m(E) \lesssim \left(\frac{\varphi(N)^{1/8}}{{\delta}^4} + \frac{|t|^2}{\mu}\right). 
        \end{equation}
        Now we will choose appropriate constants $\delta >0$ and $\mu >1$. Using the elementary identity 
        \begin{equation*}
            \exp{(z)} = (1+z) \exp \left(\frac{z^2}{2} + o(|z|^2)\right), 
        \end{equation*}
        where $o(|z|^2)/ |z|^2 \to 0$ as $z \to 0$, we deduce 
        \begin{equation*}
            \exp{( i \langle t , T_N \rangle )}
            = \left(\prod_{k=1}^{Q_N} \left(1+ \frac{ i \sqrt{2} \langle t , \xi_k \rangle}{\sigma_N} \right)
              \exp{\left(-  \frac{{\langle t , \xi_k \rangle}^2}{\sigma_N^2}\right) } \right)
              \exp{ \left( \sum_{k=1}^{Q_N} o \left(\frac{{ \langle t , \xi_k \rangle}^2}{\sigma_N^2} \right)\right)}.
        \end{equation*}
        Fix $\varepsilon >0$ and let $\mu >1$ be a constant to be fixed later. Note that $ {\langle t , \xi_k (z) \rangle}^2 / \sigma_N^2 \leq \delta^2 |t|^2$ if $z \in \circle \setminus E_k$, $k=1, \ldots, Q_N$. So picking $\delta = \delta (\varepsilon , t) >0$ sufficiently small we have
        \begin{equation*}
        \left|\Real\sum_{k=1}^{Q_N} o\left(\frac{{ \langle t , \xi_k (z) \rangle}^2}{\sigma_N^2} \right)\right|
            \leq \varepsilon \sum_{k=1}^{Q_N} \frac{{ \langle t , \xi_k (z) \rangle}^2}{\sigma_N^2} \leq    \varepsilon \mu , \quad z \in \circle \setminus E.
        \end{equation*}
        Note that the last inequality holds because $z \in \circle \setminus E_0.$
        Hence
        \begin{equation*}
            \begin{split}
                &\left|\int_{\circle \setminus E} \exp{(i \langle t , T_N \rangle ) }\, dm
                 - \int_{\circle \setminus E} \prod_{k=1}^{Q_N} \left(1+ \frac{ i \sqrt{2} \langle t , \xi_k \rangle}{\sigma_N}\right)
                   \exp{\left(- \frac{{\langle t , \xi_k \rangle}^2}{\sigma_N^2}\right) }\, dm \right| \leq \\
                & \leq \left(e^{ \varepsilon \mu} - 1\right) 
                  \int_{\circle \setminus E} \prod_{k=1}^{Q_N} \left(1+ \frac{{2 \langle t , \xi_k \rangle}^2}{\sigma_N^2} \right)^{1/2}
                  \exp{\left(- \frac{{\langle t , \xi_k \rangle}^2}{\sigma_N^2}\right)}\,  dm
                  \leq e^{ \varepsilon \mu} - 1. 
            \end{split}
        \end{equation*}
        The last inequality follows from the elementary estimate $(1+x)^{1/2} e^{-x/2} \leq 1$ if $x \geq 0$.
        Hence
        \begin{equation*}
            \left|\varphi_N (t)
            - \ic \prod_{k=1}^{Q_N} \left(1+ \frac{ i \sqrt{2} \langle t , \xi_k \rangle}{\sigma_N}\right)
                  \exp{ \left(- \frac{{\langle t , \xi_k \rangle}^2}{\sigma_N^2}\right)}\, dm  \right|
            \leq 2m(E) + e^{ \varepsilon \mu} - 1. 
        \end{equation*}
        Taking $\mu = 1/\sqrt{\varepsilon} >1$ we get that both $|t|^2/\mu$ and $\varepsilon \mu$ are small.
        Then estimate \eqref{E} yields the approximation \eqref{eq:FourierTransformApprox} as $N \to \infty.$
        Therefore to prove \eqref{eq:FourierTransform} it is sufficient to show that for any $t \in \C$ one has
        \begin{equation*}
            \lim_{N \to \infty} \ic \prod_{k=1}^{Q_N} \left(1+ \frac{ i \sqrt{2} \langle t , \xi_k \rangle}{\sigma_N}\right)
                                    \exp{\left(- \frac{{\langle t , \xi_k \rangle}^2}{\sigma_N^2}\right)}\, dm 
            = \exp{(-|t|^2 / 2)}. 
        \end{equation*}
        This will follow from Lemma \ref{lemma:IntegralLimit} applied to the functions
        \begin{equation*}
            f_N = \prod_{k=1}^{Q_N} \left( 1+ \frac{ i \sqrt{2} \langle t , \xi_k \rangle}{\sigma_N} \right), 
        \end{equation*}
        \begin{equation*}
            g_N = \frac{1}{\sigma_N^2} \sum_{k=1}^{Q_N} {  \langle t , \xi_k \rangle}^2 - \frac{|t|^2}{2} . 
        \end{equation*}
        According to Lemma \ref{lemma:IntegralLimit} it is sufficient to show
        \begin{equation}
            \label{eq:FunctionFL2Norm}
            \sup_N \|f_N \|_2 < \infty, 
        \end{equation}
        \begin{equation}
            \label{eq:FunctionGL2NormLimit}
            \lim_{N \to \infty}  \|g_N\|_2 =0, 
        \end{equation}
        \begin{equation}
            \label{eq:FunctionFExpectation}
            \lim_{N \to \infty} \ic f_N dm = 1. 
        \end{equation}

        \textit{2. The norm $\norm{f_N}{2}.$}
        Observe that
        \begin{equation*}
            \prod_{k=1}^{Q_N} \left(1 + \frac{{2 \langle t , \xi_k \rangle}^2}{\sigma_N^2} \right)
            = 1 + \sum_{k=1}^{Q_N} \frac{2^{k}}{\sigma_N^{2k}}
                                   \sum {\langle t , \xi_{j_1} \rangle}^2 \ldots {\langle t , \xi_{j_k} \rangle}^2, 
        \end{equation*}
        where the last sum is taken over all collections of indices $1\leq j_1 < \ldots < j_k \leq Q_N$.
        Since ${  \langle t , \xi_n \rangle}^2 \leq |t|^2 |\xi_n|^2$,
        Theorem \ref{thm:SquaredModuliPartialSums},
        part~\eqref{thm:item:L2NormEstimate} of Theorem~\ref{thm:L2NormComparability}
        and estimate~\eqref{eq:NormOfLargeBlock} give that
        \begin{equation*}
            \ic {\langle t , \xi_{j_1} \rangle}^2 \ldots {\langle t , \xi_{j_k} \rangle}^2\, dm
            \leq C(f)^k |t|^{2k} \varphi(N)^{k/8} \sigma_N^{2k},\quad 1 \leq k \leq Q_N.  
        \end{equation*}
        Since the total number of distinct collections of indices $j_1, \ldots , j_k$
        verifying $1\leq j_1 < \ldots < j_k \leq Q_N$ is $\binom{Q_N}{k}$, we deduce
        \begin{equation*}
            \ic \prod_{k=1}^{Q_N} \left(1 + \frac{2{  \langle t , \xi_k \rangle}^2}{{\sigma_N}^2} \right)\, dm
            \leq 1 + \sum_{k=1}^{Q_N} {\binom{Q_N}{k}} \frac{C(f)^k 2^k |t|^{2k} \varphi(N)^{k/8} \sigma_N^{2k}}{\sigma_N^{2k}}. 
        \end{equation*}
        Simplifying last expression we deduce
        \begin{equation*}
            \ic \prod_{k=1}^{Q_N} \left(1 + \frac{{  2 \langle t , \xi_k \rangle}^2}{  \sigma_N^2} \right)\, dm
            \leq \left(1 + 2 C(f) |t|^2 \varphi(N)^{1/8} \right)^{Q_N}.
        \end{equation*}
        Now~\eqref{eq:NumberOfLargeBlocks} implies that $Q_N \lesssim \varphi(N)^{-1/8},$ if $N$ is sufficiently large, 
        from which we get that
        $\norm{f_N}{2}^2 \leq \exp{(3C(f) |t|^2)}.$
        This gives \eqref{eq:FunctionFL2Norm}.

        \textit{3. The norm $\| g_N \|_2.$}
        Recall that
        \begin{equation}
            \label{eq:XiBlockNotation}
            \xi_k = \sum_{n \in {\A}(k)} a_n f^n , \qquad k=1, \ldots , Q_N.
        \end{equation}
        Let $\A = \bigcup_{k=1}^{Q_N} {\A}(k).$
        Observe that~\eqref{eq:SumOfShortBlocks} together with Theorem~\ref{thm:L2NormComparability} gives
        \begin{equation*}
            \lim_{N \to \infty} \frac{\sum_{n \in \A} |a_n|^2 }{S_N^2} = 1. 
        \end{equation*}
        This is assumption \eqref{eq:PVFirst} of Lemma \ref{lemma:PartialVariances}.
        Assumption \eqref{eq:PVSec} is an immediate consequence of~\eqref{eq:QuantitativeGrowthCondition} and \eqref{eq:NormOfLargeBlock}. 
        Thus, Lemma \ref{lemma:PartialVariances} gives 
        \begin{equation}
            \label{eq:XiL2NormSum}
            \lim_{N \to \infty} \frac{\sum_{k=1}^{Q_N} \norm{\xi_k}{2}^2 }{\sigma_N^2} = 1. 
        \end{equation}
        Denote $\lambda = t/|t|$.
        We have
        \begin{equation*}
            g_N =
            \frac{|t|^2}{4 \sigma_N^2} \sum_{k=1}^{Q_N} \left( 2|\xi_k|^2 + \overline{\lambda^2} \xi_k^2 + \lambda^2 \overline{\xi_k^2} - 2 \frac{\sigma_N^2}{Q_N} \right). 
        \end{equation*}
        Applying \eqref{eq:XiL2NormSum}, the proof of \eqref{eq:FunctionGL2NormLimit} reduces to show
        \begin{equation*}
            \lim_{N \to \infty} \norm{\frac{1}{\sigma_N^2} \sum_{k=1}^{Q_N} \psi_k}{2} = 0, 
        \end{equation*}
        where $\psi_k = 2(|\xi_k|^2 -\norm{\xi_k}{2}^2)+ \overline{\lambda^2} \xi_k^2 + \lambda^2 \overline{\xi_k^2}.$
        Now
        \begin{equation}
            \label{eq:PsiSumL2Norm}
            \norm{\sum_{k=1}^{Q_N} \psi_k}{2}^2
            = \sum_{k=1}^{Q_N} \norm{\psi_k}{2}^2 + 2 \Real \sum_{k=1}^{Q_N - 1} \sum_{j>k}^{Q_N} \ic \overline{\psi_k} \psi_j\, dm.
        \end{equation}
        Since $|\psi_k| \leq 4|\xi_k|^2 + 2 \norm{\xi_k}{2}^2$,
        parts \eqref{thm:item:L2NormEstimate} and \eqref{thm:item:L4NormEstimate}
        of Theorem~\ref{thm:L2NormComparability} and estimate~\eqref{eq:NormOfLargeBlock}
        give that $\norm{\psi_k}{2}^2 \leq C(f) \varphi(N)^{1/4} \sigma_N^4.$
        Hence, using that $Q_N \simeq \varphi(N)^{-1/8}$ by expression~\eqref{eq:NumberOfLargeBlocks}, we get
        \begin{equation*}
            \sum_{k=1}^{Q_N} \norm{\psi_k}{2}^2 \lesssim \varphi(N)^{1/8} \sigma_N^4
        \end{equation*}
        and we deduce
        \begin{equation*}
            \lim_{N \to \infty} \frac{1}{\sigma_N^4} \sum_{k=1}^{Q_N} \norm{\psi_k}{2}^2 = 0. 
        \end{equation*}
        The second term in~\eqref{eq:PsiSumL2Norm} splits as
        \begin{equation*}
            \sum_{k=1}^{Q_N - 1} \sum_{j>k}^{Q_N} \ic \overline{\psi_k} \psi_j\, dm = A + B + C + D, 
        \end{equation*}
        where 
        \begin{equation*}
            A = 4 \sum_{k=1}^{Q_N - 1} \sum_{j>k}^{Q_N}
            \ic \left(|\xi_k|^2 - \norm{\xi_k}{2}^2\right)\left(|\xi_j|^2 - \norm{\xi_j}{2}^2\right)\, dm , 
        \end{equation*}
        \begin{equation*}
            B = 2 \sum_{k=1}^{Q_N - 1} \sum_{j>k}^{Q_N}
            \ic\left(|\xi_k|^2 - \norm{\xi_k}{2}^2\right)\left(\overline{\lambda^2}\xi_j^2 + \lambda^2 \overline{\xi_j^2}\right)\, dm , 
        \end{equation*}
        \begin{equation*}
            C = 2 \sum_{k=1}^{Q_N - 1} \sum_{j>k}^{Q_N}
            \ic\left(\overline{\lambda^2} \xi_k^2 + \lambda^2 \overline{\xi_k^2}\right)\left(|\xi_j|^2 - \norm{\xi_j}{2}^2\right)\, dm ,
        \end{equation*}
        \begin{equation*}
            D = \sum_{k=1}^{Q_N - 1} \sum_{j>k}^{Q_N}
            \ic\left(\overline{\lambda^2} \xi_k^2 + \lambda^2 \overline{\xi_k^2}\right)
               \left(\overline{\lambda^2} \xi_j^2 + \lambda^2 \overline{\xi_j^2}\right)\, dm.  
        \end{equation*}
By Theorem \ref{thm:SquaredModuliPartialSums},
        $\norm{\xi_k \xi_j}{2} = \norm{\xi_k}{2} \norm{\xi_j}{2}$ if $k \neq j$ and we deduce $A=0.$
        Since the mean of $\xi_j^2$ over the unit circle vanishes and at almost every point in the unit circle one has  
        \begin{equation}
            \label{eq:XiModulusSquaredExpression}
            |\xi_k|^2 = \sum_{n \in {\A}(k)} |a_n|^2 + 2 \Real \sum_{n \in {\A}(k)}\sum_{j \in {\A}(k), j>n} \overline{a_n} a_j \overline{f^n} f^j, 
        \end{equation}
        the integrals in $B$ can be written as a linear combination of 
        \begin{equation*}
            \ic f^{n_1} \overline{f^{j_1}} \left(\overline{\lambda^2} \xi_j^2 + \lambda^2 \overline{\xi_j^2}\right)\, dm,
        \end{equation*}
        where $n_1, j_1 \in {\A}(k)$ and
        hence $\max \{n_1, j_1\}< \min \{n\colon n \in {\A}(j)\}.$
        According to part \eqref{lemma:item:FourFactorsCancellation} of Lemma \ref{lemma:FourFactors}, 
        \begin{equation*}
            \ic f^{n_1} \overline{f^{j_1}} \xi_j^2\, dm =0 
        \end{equation*}
        and we deduce $B=0$. Since the mean of $\xi_k^2$ over the unit circle vanishes, we have
        \begin{equation*}
            C= 4  \Real \sum_{k=1}^{Q_N - 1} \overline{{\lambda}^2}  \sum_{j=k+1}^{Q_N} \ic \xi_k^2 |\xi_j|^2\, dm . 
        \end{equation*}
        Fix $j>k$. Using again that $\xi_k$ vanishes at the origin and formula \eqref{eq:XiModulusSquaredExpression}, we have 
        \begin{equation*}
            \ic \xi_k^2 |\xi_j|^2\, dm = \ic \xi_k^2 2 \Real \sum_{r, l \in {\A (j)}, l>r} \overline{a_r} a_l \overline{f^r} f^l \,  dm.  
        \end{equation*}
        Using formula \eqref{eq:XiBlockNotation} to expand $\xi_k^2$, we obtain 
        \begin{equation*}
            \ic \xi_k^2 |\xi_j|^2\, dm = E +F, 
        \end{equation*}
        where
        \begin{equation*}
            E =
            \sum_{n \in {\A (k)}}  \sum_{r, l \in {\A (j)}, l>r} a_n^2
                \ic (f^n)^2 \left( \overline{a_r} a_l \overline{f^r} f^l + a_r \overline{a_l} f^r  \overline{f^l} \right)\, dm,  
        \end{equation*}
        \begin{equation*}
            F =
            2 \sum_{n,s \in {\A (k)} \colon s>n} a_n a_s \sum_{r, l \in {\A (j)}, l>r} 
                \ic f^n f^s \left( \overline{a_r} a_l \overline{f^r} f^l + a_r \overline{a_l} f^r  \overline{f^l}\right)\, dm.   
        \end{equation*}
        By part \eqref{lemma:item:FirstFactorSquared} of Lemma \ref{lemma:FourFactors} we have
        \begin{equation*}
            \left|\ic (f^n)^2 \overline{f^r} f^l\, dm\right| +
                \left|\ic (f^n)^2 \overline{f^l} f^r\, dm\right|
            \leq C(f) |f'(0)|^{l-n},  \quad \text{ if } n < r < l, r-n \geq 3.   
        \end{equation*}
        We deduce that 
        \begin{equation*}
            |E| \leq C(f) \sum_{n \in \A (k)} |a_n|^2 \sum_{r, l \in \A (j), l>r}  |a_r| |a_l| |f'(0)|^{l-n} . 
        \end{equation*}
         Denote $ \psi (N) = \varphi(N)^{-1/2}/2$. 
        According to~\eqref{eq:NumberOfTermsPerBlock}, since $j>k$ 
        we have $r - n \geq \psi (N)$ for any $r \in {\A}(j)$ and any $n \in {\A}(k).$ Hence we have 
        \begin{equation}
        \label{nou}
            \sum_{r, l \in {\A}(j), l>r}  |a_r| |a_l| |f'(0)|^{l-n}
            \leq |f'(0)|^{ \psi (N)} \sum_{t \geq 1} |f'(0)|^{t} \sum_{r \in {\A}(j)\colon r+t \in {\A}(j)}  |a_r| |a_{r+t}|. 
        \end{equation}
        By Cauchy-Schwarz's inequality, the last sum is bounded by $\sum_{r \in {\A}(j)} |a_r|^2 \lesssim \varphi(N)^{1/8} \sigma_N^2.$
        Hence
        \begin{equation}
            \label{eq:EstimateE}
            |E| \leq C(f) |f'(0)|^{\psi (N)} \varphi(N)^{1/4} \sigma_N^4.
        \end{equation}
        Similarly, part \eqref{lemma:item:FourArbitraryFactors} of Lemma \ref{lemma:FourFactors} gives that 
        \begin{equation*}
            \left|\ic f^n f^s \overline{f^r} f^l\, dm \right|
                + \left|\ic f^n f^s f^r \overline{f^l}\, dm\right|
            \leq C(f) |f'(0)|^{l-n}, \quad n<s<r<l, 
        \end{equation*}
        if $r-s \geq 3$. 
Then 
\begin{equation*}
            |F| \leq 2 C(f) \sum_{n,s \in {\A}(k)\colon s>n} |a_n| |a_s| \quad \sum_{r, l \in {\A}(j), l>r} |f'(0)|^{l-n} |a_r| |a_l|. 
        \end{equation*}
        As before, since $j>k$ we have that $r - s \geq \psi (N)$ for any $r \in {\A}(j)$ and any $s \in {\A}(k).$ Hence $l-n = l-r+ r-s+s-n \geq \psi (N) + l-r + s-n$, for any $n,s \in {\A}(k)$, $s>n$ and any $r, l \in {\A}(j)$, $l>r$. Hence
        $$
        |F| \leq  2 C(f) |f'(0)|^{\psi (N)} \sum_{n,s \in {\A}(k)\colon s>n} |f'(0)|^{s-n} |a_n| |a_s| \quad \sum_{r, l \in {\A}(j), l>r} |f'(0)|^{l-r} |a_r| |a_l|.
        $$
        Repeating the argument in \eqref{nou} we obtain
        \begin{equation}
            \label{eq:EstimateF}
            |F| \leq 2 C(f) \sigma_N^4 |f'(0)|^{\psi (N)} \varphi (N)^{1/4}.
        \end{equation}
        Now, the exponential decay in~\eqref{eq:EstimateE} and~\eqref{eq:EstimateF} give that 
        \begin{equation}
            \lim_{N \to \infty} \frac{C}{\sigma_N^4} = 0. 
        \end{equation}
        The corresponding estimate for $D$ follows from the estimate
        \begin{equation*}
            \left|\ic {\xi_k}^2 \overline{{\xi_j}^2}\, dm\right|
            \leq C(f) S_N^4 |f'(0)|^{\psi (N)}, \quad k<j. 
        \end{equation*}
        As before this last estimate follows from~\eqref{eq:NumberOfTermsPerBlock} and from
        \begin{equation*}
            \left|\ic f^n f^s \overline{f^l}\overline{f^t}\, dm\right|
            \leq C(f) |f'(0)|^{t-n}, n \leq s < l \leq t, l-s \geq 3,
        \end{equation*}
which follows from parts \eqref{lemma:item:FourArbitraryFactors} of Lemma \ref{lemma:FourFactors} when $n<s<l<t$, part (c) of Lemma \ref{lemma:FourFactors} if $n=s, l<t$ and part (d) if $l=t$.
        This finishes the proof of \eqref{eq:FunctionGL2NormLimit}.

        \textit{4. The integral of $f_N.$}
        In this last step we will prove \eqref{eq:FunctionFExpectation}.
        Observe that at almost every point in the unit circle we have 
        \begin{equation}
        \label{eq:lesfn}
            f_N = 1+ \sum_{k=1}^{Q_N} \frac{i^k 2^{k/2}}{\sigma_N^k}
                \sum  \langle t , \xi_{i_1} \rangle \ldots  \langle t , \xi_{i_k} \rangle ,  
        \end{equation}
        where the second sum is taken over all collections of indices $1 \leq i_1 < \ldots < i_k \leq Q_N $.
        Fix $1 \leq i_1 < \ldots < i_k \leq Q_N $.
        The integral
        \begin{equation*}
            \ic \langle t , \xi_{i_1} \rangle \ldots  \langle t , \xi_{i_k} \rangle\, dm
            = 2^{-k} \ic \prod_{n=1}^k \left(\overline{t} \xi_{i_n} + t \overline{ \xi_{i_n}}\right)\, dm
        \end{equation*}
        is a multiple of a sum of $2^k$ integrals of the form
        \begin{equation*}
            {\overline{t}}^r t^l \ic  \xi_{i_1}^{\varepsilon_1} \ldots  \xi_{i_k}^{\varepsilon_k}\, dm ,
        \end{equation*}
        where $r+l = k$, $\varepsilon_i =1  $ or $\varepsilon_i = -1  $ for $i=1, \ldots , k$. We recall the notation ${\xi_i}^{-1} (z) = \overline{\xi_i (z)} $, $ z \in \circle$.
        Now, each $\xi_i$ is a linear combination of iterates of $f$, that is
        \begin{equation*}
            \xi_j = \sum_{n \in \A (j)} a_n f^n . 
        \end{equation*}
        Hence
        \begin{equation*}
            \ic \xi_{i_1}^{\varepsilon_1} \ldots  \xi_{i_k}^{\varepsilon_k}\, dm
            = \sum_{\boldsymbol{n} \in \mathcal{C}} \prod_{j=1}^k a_{n_j}^{\varepsilon_j} \ic f^{n_1 \varepsilon_1} \ldots f^{n_k \varepsilon_k}\, dm, 
        \end{equation*}
        where $\sum_{\boldsymbol{n} \in \mathcal{C}}$ means the sum over the collection $\mathcal{C}$ of all possible $k$-tuples $\boldsymbol{n} = \{n_j\}_{j=1}^k$ of indices
        such that $n_j \in \A (i_j)$ for $j = 1, \ldots, k.$
        Since $|a_n| \leq S_N$, $n \leq N$, we have
        \begin{equation}
        \label{eq:integraldelesxin}
            \left|\ic \xi_{i_1}^{\varepsilon_1} \ldots  \xi_{i_k}^{\varepsilon_k}\, dm\right|
            \leq S_N^k \sum_{\boldsymbol{n} \in \mathcal{C}} \left| \ic f^{n_1 \varepsilon_1} \ldots f^{n_k \varepsilon_k}\, dm\right|.  
        \end{equation}
        Let $\boldsymbol{\varepsilon} = \{\varepsilon_j\}_{j=1}^{k}$ be fixed and consider
        $\Phi (\boldsymbol{n}) = \Phi(\boldsymbol{\varepsilon},\boldsymbol{n}) = \sum_{j=1}^{k-1}  \delta_j (n_{j+1} - n_j)$
        where $\delta_j \in \{0,1/2,1\}$ for $j = 1, \ldots, k-1,$
        with $\delta_1 = 1$ and $\delta_{k-1} \geq 1/2,$
        and with $\delta_j = 1$ if and only if $\delta_{j-1} = 0$ for $j = 2, \ldots, k-1,$
        as introduced in Theorem \ref{thm:HigherCorrelations}.
        Let $a = |f'(0)|.$
        Theorem~\ref{thm:HigherCorrelations} gives 
        \begin{equation}
        \label{cotaintegraljin}
            \left|\ic \xi_{i_1}^{\varepsilon_1} \ldots  \xi_{i_k}^{\varepsilon_k}\, dm\right|
            \leq k! S_N^k C(f)^k \sum_{\boldsymbol{n} \in \mathcal{C}} a^{\Phi(\boldsymbol{n})}.
        \end{equation}
        We split the sum over $\boldsymbol{n} \in \mathcal{C}$ as follows.
        Let $\mathcal{D}$ denote the set of $(k-1)$-tuples $\boldsymbol{\delta} = \{\delta_j\}_{j=1}^{k-1}$
        of coefficients that can appear in $\Phi(\boldsymbol{n})$ as one varies $\boldsymbol{n},$
        that is, those tuples with $\delta_j \in \{0,1/2,1\}$ for $j = 1, \ldots, k-1,$
        with $\delta_1 = 1$ and $\delta_{k-1} \geq 1/2,$
        and with $\delta_j = 1$ if and only if $\delta_{j-1} = 0,$ for $j = 2, \ldots, k-1.$
        Observe that $\mathcal{D}$ has at most $2^k$ elements.
        Given a $k$-tuple $\boldsymbol{n} \in \mathcal{C},$
        let us denote by $\boldsymbol{\delta}(\boldsymbol{n})$ the $(k-1)$-tuple $\boldsymbol{\delta}$
        of coefficients appearing in $\Phi(\boldsymbol{n})$ for that particular value of $\boldsymbol{n}.$
        Then we have that
        \begin{equation*}
            \sum_{\boldsymbol{n} \in \mathcal{C}} a^{\Phi(\boldsymbol{n})} =
            \sum_{\boldsymbol{\delta} \in \mathcal{D}} \sum_{\{\boldsymbol{n} \in \mathcal{C}\colon \boldsymbol{\delta}(\boldsymbol{n}) =
            \boldsymbol{\delta}\}} a^{\Phi(\boldsymbol{n})}.
        \end{equation*}
        Given $\boldsymbol{\delta} = \{\delta_j\}_{j=1}^{k-1} \in \mathcal{D}$ fixed,
        we define $\Phi_{\boldsymbol{\delta}}(\boldsymbol{n}) = \sum_{j=1}^{k-1} \delta_j (n_{j+1} - n_j)$
        for every $\boldsymbol{n} \in \mathcal{C}.$
        We clearly have that
        \begin{equation}
            \label{eq:SumFixingDelta}
            \sum_{\boldsymbol{n} \in \mathcal{C}} a^{\Phi(\boldsymbol{n})} \leq
            \sum_{\boldsymbol{\delta} \in \mathcal{D}} \sum_{\boldsymbol{n} \in \mathcal{C}} a^{\Phi_{\boldsymbol{\delta}}(\boldsymbol{n})}.
        \end{equation}
        Consider now a fixed $\boldsymbol{\delta} = (\delta_1,\ldots,\delta_{k-1}) \in \mathcal{D}$
        and let us compute the rightmost sum in~\eqref{eq:SumFixingDelta}.
        Recall that $\delta_1 = 1.$
        It may be useful for the following argument to have in mind that $\boldsymbol{\delta}$
        might look like $\boldsymbol{\delta}= (1, 1/2, \ldots, 1/2, 0,1,0,1, 1/2, \dots, ,1/2).$
        This is to say that $\boldsymbol{\delta}$ has consecutive chains of nonzero elements
        (at least one such chain),
        all of them starting by $1$ and with the rest of the terms, if any, equal to $1/2.$
        Moreover, there is exactly one null term between each consecutive pair of such chains.
        Let $l(1)$ be the minimum integer such that $\delta_{l(1)+1} = 0$
        (we set $l(1) = k-1$ if $\delta_j \neq 0$ for all $1 \leq j \leq k-1$).
        This is to say that $l(1)$ is the last index of the first chain of nonzero elements.
        In particular, observe that if $l(1) > 1,$ we have that $\delta_j = 1/2$ for $2 \leq j \leq l(1)$
        because by Theorem \ref{thm:HigherCorrelations} $\delta_j =1$ if and only if $\delta_{j-1} =0$ for $j>1$.
        Assume now that we have determined $l(m-1).$
        If $l(m-1) < k-1,$ then let $l(m)$ be the minimum integer such that $l(m-1) < l(m) \leq k-1$
        and such that $\delta_{l(m)+1} = 0.$
        As before, we set $l(m) = k-1$ if $\delta_j \neq 0$ for all $l (m-1)+2 \leq j \leq k-1$. We iterate this process until we set $l(r) = k-1$ for some integer $1 \leq r \leq k.$
        Roughly speaking the indices $l(m)$ indicate the end of the strips of nonzero terms in the components of $\boldsymbol{\delta} $.
        Taking $l(0) = -1,$ note that 
        $$
        \Phi_{\boldsymbol{\delta}}(\boldsymbol{n}) = \sum_{j=1}^{k-1} \delta_j (n_{j+1} - n_j) = \sum_{m=1}^r \left(( n_{l(m-1) +3} -  n_{l(m-1) +2}) + \frac12 (n_{l(m)+1} - n_{l(m-1) +3})\right). 
        $$
        Hence the sum 
        $$
         \sum_{\boldsymbol{n} \in \mathcal{C}} a^{\Phi_{\boldsymbol{\delta}}(\boldsymbol{n})}
        $$
        in the right-hand side of \eqref{eq:SumFixingDelta}
        becomes a product over $m = 1, \ldots, r$, of sums of the form 
        \begin{equation}
            \label{productesparcials}
            \sum_{\substack{n_j \in \A(i_j)\\ j=l(m-1)+2,\ldots,l(m)+1}} a^{ (n_{l(m-1) +3} -  n_{l(m-1) +2}) + \frac12 (n_{l(m)+1} - n_{l(m-1) +3})} .
        \end{equation}
        Thus, to estimate 
        \begin{equation*}
            \sum_{\boldsymbol{n} \in \mathcal{C}} a^{\Phi_{\boldsymbol{\delta}}(\boldsymbol{n})},
        \end{equation*}
        we need to estimate sums of the form \eqref{productesparcials}. 
        Since the argument is identical for any $m$, we present it for $m=1$. To simplify the notation write $l=l(1)$ and assume $l>2.$ Denote here $\overline{n_1} = \max \A(i_1),$ $\underline{n_2} = \min \A(i_2)$ and $\underline{n_{l+1}} = \min \A(i_{l+1}),$
        and observe that $\underline{n_2} - \overline{n_1} \geq \psi (N)$ because of~\eqref{eq:NumberOfTermsPerBlock}. Here as before, $\psi (N) = \varphi (N)^{-1/2} / 2$. Notice that the exponent in \eqref{productesparcials} is 
        $$
        (n_{2} -  n_{1}) + \frac12 (n_{l+1} - n_{2}) = \frac12 (n_{l+1} - n_1) + \frac12 (n_2 - n_1) \geq \frac12 \psi (N) + \frac12 (n_{l+1} - n_1).
        $$
        Summing over $n_1$ and $n_2$ we get that \eqref{productesparcials} is bounded by
        \begin{equation*}
            C^2 a^{\psi (N) / 2} \sum_{\substack{n_j \in \A(i_j)\\ j=3,\ldots,l+1}} a^{(n_{l+1} - \overline{n_1})/2}.
        \end{equation*}
        Next, summing over $n_j$ for $j$ up to $l$ yields the factor $|\A(i_3)| \cdot |\A(i_4)| \cdots |\A(i_{l})|,$
        while summing over $n_{l+1}$ we get the factor $a^{(\underline{n_{l+1}} - \overline{n_1})/2}.$ In other words,
        \begin{equation*}
            \sum_{\substack{n_j \in \A(i_j)\\ j=3,\ldots,l+1}} a^{(n_{l+1} - \overline{n_1})/2} \leq C^3 a^{(\underline{n_{l+1}} - \overline{n_1})/2} (|\A(i_3)| \cdots |\A(i_{l})|). 
        \end{equation*}
        Here, $|\A(i_j)|$ denotes the number of indices in the set $\A(i_j).$
        Recall that between each consecutive pair of sets $\A(i_j),$
        there is a set $\B(i_j)$ with at least $\psi(N)$ terms.
        Thus, we have that $\underline{n_{l+1}} - \overline{n_1} \geq l\psi (N) + |\A(i_3)| + \ldots + |\A(i_{l})|.$
        Hence, we find that
        \begin{equation}
            \label{eq:BoundForPartialSumOverIndices}
            \sum_{\substack{n_j \in \A(i_j)\\ j=1,\ldots,l+1}} a^{(n_2 - n_1) + (n_{l+1} - n_2)/2} \leq C^{l+1} a^{(l+1)\psi (N) / 2}
        \end{equation}
        because each factor $a^{|\A(i_j)|} |\A(i_j)|$ is bounded by a universal constant.
        Note that if $l =1$ or $l = 2$, then \eqref{eq:BoundForPartialSumOverIndices} is obvious.

        Now,the full sum 
        \begin{equation*}
            \sum_{\boldsymbol{n} \in \mathcal{C}} a^{\Phi_{\boldsymbol{\delta}}(\boldsymbol{n})},
        \end{equation*}
        in the right-hand side of \eqref{eq:SumFixingDelta}
        becomes a product over $m = 1, \ldots, r$, of sums of the form \eqref{productesparcials}.
        Thus, applying the estimate \eqref{eq:BoundForPartialSumOverIndices} we get that
        \begin{equation*}
            \sum_{\boldsymbol{n} \in \mathcal{C}} a^{\Phi_{\boldsymbol{\delta}}(\boldsymbol{n})} \leq
            \prod_{m=1}^r (C a^{\psi (N) / 2})^{(l(m)-l(m-1))} \leq
            C^k a^{k\psi (N) / 2}.
        \end{equation*}
        Next, summing over $\boldsymbol{\delta} \in \mathcal{D}$ and using the fact that $\mathcal{D}$ has at most $2^k$ such tuples, we get that the sum in the left hand side of   \eqref{eq:SumFixingDelta} can be estimated as 
        \begin{equation*}
            \sum_{\boldsymbol{n} \in \mathcal{C}} a^{\Phi(\boldsymbol{n})} \leq (2C)^k a^{k \psi (N) / 2}.
        \end{equation*}
        Thus, using this in~\eqref{cotaintegraljin}, there exists a constant $C(f)>0$ such that 
        \begin{equation*}
            \left|\ic \xi_{i_1}^{\varepsilon_1} \ldots  \xi_{i_k}^{\varepsilon_k}\, dm\right|
            \leq k! S_N^k C(f)^k a^{k \psi (N) / 2}.
        \end{equation*}
        We deduce that 
        \begin{equation*}
            \left|\ic \langle t , \xi_{i_1} \rangle \ldots  \langle t , \xi_{i_k} \rangle\, dm\right|
            \leq k! S_N^k C(f)^k |t|^k a^{k \psi (N) / 2}.
        \end{equation*}
        Since the total number of collections of indices $1 \leq i_1 < \ldots < i_k \leq Q_N$ is $\binom{Q_N}{k}$, using \eqref{eq:lesfn}, it follows that
        \begin{equation*}
            \left|\ic f_N\, dm - 1\right|
            \leq \sum_{k=1}^{Q_N} {\binom{Q_N}{k}} k! 2^{k/2} \sigma_N^{-k} (C(f)S_N |t|)^k a^{k \psi (N) / 2}.
        \end{equation*}
        Finally, using that $k! \leq Q_N^k$ for any integer $1 \leq k \leq Q_N$, last sum is smaller than 
        \begin{equation*}
            \left( 1 + \frac{C(f)\sqrt{2}|t| S_N Q_N  a^{\psi (N) / 2}}{\sigma_N} \right)^{Q_N} - 1,
        \end{equation*}
        which tends to $0$ as $ N \to \infty$ because by~\eqref{eq:NumberOfLargeBlocks} we have
        \begin{equation*}
            \lim_{N \to \infty} \frac{S_N Q_N^2 a^{\psi (N) / 2}}{\sigma_N} = 0. 
        \end{equation*}
    \end{proof}
    
    \printbibliography

    \Addresses

\end{document}